\documentclass[a4paper, reqno]{article}
\usepackage[english]{babel}

\usepackage[usenames,dvipsnames]{xcolor}

\usepackage{enumerate}
\usepackage[utf8]{inputenc}
\usepackage[left=3cm,right=2cm,top=3cm,bottom=2cm]{geometry}
\usepackage{amsthm, amssymb, mathtools, hyperref, cases, multicol, enumitem}
\usepackage{graphicx}
\usepackage{indentfirst}

\numberwithin{equation}{section}

\newtheorem{thm}{Theorem}[section]

\newtheorem{prop}[thm]{Proposition}
\newtheorem{lem}[thm]{Lemma}
\theoremstyle{remark}

\newtheorem{rmk}[thm]{Remark}

\theoremstyle{definition}

\newtheorem{cond}[thm]{Condition}

\newcommand{\T}{\mathcal{T}}
\newcommand{\eps}{\varepsilon}

\DeclareMathOperator{\interior}{int}

\newcommand{\real}{\mathbb{R}}
\newcommand{\nat}{\mathbb{N}}
\newcommand{\A}{\mathcal{A}}
\newcommand{\N}{\mathcal{N}}

\newcommand{\dif}{\mathrm{d}}

\DeclarePairedDelimiter{\abs}{\lvert}{\rvert}
\DeclarePairedDelimiter{\norm}{\lVert}{\rVert}
\DeclarePairedDelimiter{\parens}{(}{)}
\DeclarePairedDelimiter{\set}{\{}{\}}

\DeclarePairedDelimiter{\coi}{\lbrack}{\lbrack}
\DeclarePairedDelimiter{\oci}{\rbrack}{\rbrack}
\DeclarePairedDelimiter{\ooi}{\rbrack}{\lbrack}

\title{Symmetry and classification of positive standing waves of nonlinear Hartree type equations}
\author{
Eduardo de Souza Böer
\and
Ederson Moreira dos Santos
\and
Gustavo de Paula Ramos
}

\begin{document}

\maketitle

\begin{abstract}
This paper presents some qualitative properties of positive solutions to the strongly coupled system
\[
\begin{cases}
\displaystyle
- \Delta u + \tau u
=
\frac{2 p}{p + q}
\left( I_\alpha \ast |v|^q \right)
|u|^{p - 2} u
&\text{in} ~ \mathbb{R}^N,
\\
\\
\displaystyle
- \Delta v + \eta v
=
\frac{2 q}{p + q}
\left( I_\alpha \ast |u|^p \right)
|v|^{q - 2} v
&\text{in} ~ \mathbb{R}^N,
\end{cases}
\]
with $\tau, \eta > 0$, $N \in \mathbb{N}$, $0 < \alpha < N$,
\[
\max \left\{1, \frac{2 \alpha}{N}\right\} < p, q < 2^*
\quad \text{and} \quad
\frac{2 (N + \alpha)}{N} < p + q < 2_\alpha^*,
\]
where $I_\alpha$ denotes the Riesz potential,
\[
2^*
:=
\begin{cases}
\infty,
&\text{if} ~ N \in \{1, 2\},
\\
\frac{2 N}{N - 2},
&\text{if} ~ N \geq 3,
\end{cases}
\quad \text{and} \quad
2_\alpha^*
:=
\begin{cases}
\infty,
&\text{if} ~ N \in \{1, 2\},
\\
\frac{2 (N + \alpha)}{N - 2},
&\text{if} ~ N \geq 3.
\end{cases}
\]
More precisely, by means of the moving planes method, we prove that positive solutions to this system are radially symmetric and strictly radially decreasing when $p, q \geq 2$, and we obtain a classification result  for positive ground states in the case $p = q$ and $\tau = \eta$.
\end{abstract}

\noindent
{\it \small Mathematics Subject Classification:} {\small 35B06, 35J47, 35J60, 35Q40, 35Q92. }\\
		{\it \small Key words}. {\small  Hartree type equations, Choquard type systems, Radial symmetry, Classification of solutions.}

\section{Introduction}
This work focuses on qualitative properties of positive standing waves of the Hartree type equations
\begin{equation}
\label{eqn:Hartree-system}
\begin{cases}
\displaystyle
- i \frac{\partial}{\partial t} \varphi
=
\Delta \varphi
+
\frac{2 p}{p + q}
\parens*{I_\alpha \ast |\psi|^q}
|\varphi|^{p - 2} \varphi
&\text{in} ~ \real \times \real^N,
\\
\\
\displaystyle
- i \frac{\partial}{\partial t} \psi
=
\Delta \psi
+
\frac{2 q}{p + q}
\parens*{I_\alpha \ast |\varphi|^p}
|\psi|^{q - 2} \psi
&\text{in} ~ \real \times \real^N,
\end{cases}
\end{equation}
where $N \in \nat$, $0 < \alpha < N$,
$I_\alpha \colon \real^N \setminus \set{0} \to \real$
denotes the \emph{Riesz potential} defined as
\[
I_\alpha \parens{x}
=
\frac
	{\Gamma \parens*{\frac{N - \alpha}{2}}}
	{
		\Gamma \parens*{\frac{\alpha}{2}}
		\pi^{\frac{N}{2}}
		2^\alpha
	}
\frac{1}{|x|^{N - \alpha}}\, ,
\]
$p, q > 1$
and the unknowns are
$\varphi, \psi \colon \real \times \real^N \to \mathbb{C}$.

A routine verification shows that standing waves of the form
$\varphi \parens{t, x} = u \parens{x} e^{\mathrm{i} \tau t}$,
$\psi \parens{t, x} = v \parens{x} e^{\mathrm{i} \eta t}$
solve \eqref{eqn:Hartree-system} precisely when
\begin{equation}
\label{eqn:Hartree-system:2}
\begin{cases}
\displaystyle
- \Delta u + \tau u
=
\frac{2 p}{p + q}
\parens*{I_\alpha \ast |v|^q}
|u|^{p - 2} u
&\text{in} ~ \real^N,
\\
\\
\displaystyle
- \Delta v + \eta v
=
\frac{2 q}{p + q}
\parens*{I_\alpha \ast |u|^p}
|v|^{q - 2} v
&\text{in} ~ \real^N.
\end{cases}
\end{equation}
In the case $\tau = \eta$, $p = q$ and $u = v$, \eqref{eqn:Hartree-system:2} becomes the Choquard equation
\begin{equation}
\label{eqn:Choquard-type}
- \Delta w + \tau w
=
\parens*{I_\alpha \ast \abs{w}^p}
\abs{w}^{p - 2} w
\quad \text{in} \quad
\real^N.
\end{equation}

This equation appears in many different contexts as a model for nonlocal self-attractive effects, especially in the case $N = 3$ and $p = \alpha = 2$; see \cite{doi:10.1098/rspa.1937.0106, frohlichElectronsLatticeFields1954, pekarUntersuchungenUberElektronentheorie1954, penroseGravitysRoleQuantum1996}. From a mathematical point of view, it is already relatively well-understood and we refer the reader to \cite{morozGroundstatesNonlinearChoquard2013, morozGuideChoquardEquation2017} for an overview about it.

There are still few studies about \eqref{eqn:Hartree-system:2} and related systems. For instance, \cite{bhattaraiExistenceStabilityStanding2019} established the existence and orbital stability of ground states of the associated normalized system with $N \in \set{2, 3}$ and $p = q$. Similar systems were considered in \cite{wangStandingWavesCoupled2017}. The symmetry and classification of positive solutions to \eqref{eqn:Hartree-system:2} in the case
$\tau = \eta = 0$ was already established in \cite{wangClassificationQualitativeAnalysis2024}. A systematic study of \eqref{eqn:Hartree-system:2} in the case $\tau = \eta = 1$ was recently done in \cite{boerStandingWavesNonlinear2025}, including (i) conditions for the existence of positive ground states, (ii) symmetry of positive ground states, (iii) regularity/integrability of weak solutions, (iv) decay at infinity for positive ground states and (v) a nonexistence result.

Inspired by the setting considered in \cite{boerStandingWavesNonlinear2025}, we suppose throughout this paper that the following conditions on the parameters $p, q$ are satisfied:
\begin{equation}
\tag{H$_1$}
\label{H1}
\max \left\{1, \frac{2 \alpha}{N}\right\} < p, q < 2^*
\quad \text{and} \quad
\frac{2 \parens{N + \alpha}}{N} < p + q < 2_\alpha^*,
\end{equation}
where
\[
2^*
:=
\begin{cases}
\infty,
&\text{if} ~ N \in \{1, 2\},
\\
\frac{2 N}{N - 2},
&\text{if} ~ N \geq 3,
\end{cases}
\quad \text{and} \quad
2_\alpha^*
:=
\begin{cases}
\infty,
&\text{if} ~ N \in \{1, 2\},
\\
\frac{2 (N + \alpha)}{N - 2},
&\text{if} ~ N \geq 3.
\end{cases}
\]
In particular, see \cite[p. 8--9]{boerStandingWavesNonlinear2025}, these conditions ensure that there exist
$\theta_1, \theta_2 \in \ooi{1, \frac{N}{\alpha}}$
such that
\begin{equation}
\label{eqn:thetas}
\frac{1}{\theta_1} + \frac{1}{\theta_2} = \frac{N + \alpha}{N}
\quad \text{and} \quad
2 < \theta_1 p, \theta_2 q < 2^*.
\end{equation}

In this paper we consider weak solutions to \eqref{eqn:Hartree-system:2} in the Hilbert space
\[
E \parens{\real^N}
:=
H^1 \parens{\real^N} \times H^1 \parens{\real^N}.
\]
More precisely,
$\parens{u, v} \in E \parens{\real^N}$
is said to be a \emph{weak solution} to \eqref{eqn:Hartree-system:2} when
\[
\int_{\real^N}
	\nabla u \cdot \nabla \varphi
	+
	\tau u \varphi
\dif \mu
=
\frac{2 p}{p + q}
\int_{\real^N}
	\parens*{I_\alpha \ast |v|^q}
	|u|^{p - 2}
	\varphi
\dif \mu
\]
and
\[
\int_{\real^N}
	\nabla v \cdot \nabla \xi
	+
	\eta v \xi
\dif \mu
=
\frac{2 q}{p + q}
\int_{\real^N}
	\parens*{I_\alpha \ast |u|^p}
	|v|^{q - 2}
	\xi
\dif \mu
\]
for every
$\varphi, \xi \in C_c^\infty \parens{\real^N}$ (and hence for all $\varphi, \xi \in H^1\parens{\real^N}$). It follows from \cite[Theorem 1.4]{boerStandingWavesNonlinear2025} that weak solutions in
$E \parens{\real^N}$ to \eqref{eqn:Hartree-system:2} are actually classical solutions to \eqref{eqn:Hartree-system:2}. As such, we will henceforth refer to them simply as \emph{solutions in $E \parens{\real^N}$ to \eqref{eqn:Hartree-system:2}}. In this context, the aforementioned numbers $\theta_1, \theta_2$ can then be used with the Hardy--Littlewood--Sobolev inequality (see Proposition \ref{prop:HLS}) and the Sobolev embeddings to obtain a variational characterization for solutions in $E \parens{\real^N}$ to \eqref{eqn:Hartree-system:2}; see Section \ref{intro:classification} ahead.

Finally, the goal of this paper is to deepen the investigation started in \cite{boerStandingWavesNonlinear2025} by examining qualitative properties of positive solutions to \eqref{eqn:Hartree-system:2}. More precisely,  we establish a result of symmetry for positive solutions, and a classification result for positive ground states in the case
$p = q$ and $\tau = \eta$.

\subsection{Symmetry of positive solutions} Due to the nonlocal terms, the classical moving plane arguments \cite{GidasNiNirenberg} for partial differential equations do not apply to prove symmetry for positive solutions to \eqref{eqn:Hartree-system:2}.  We prove such property in Section \ref{symmetry} by means of the Chen--Li--Ou integral form of the moving planes method, introduced in \cite{chenClassificationSolutionsSystem2005, chenClassificationSolutionsIntegral2006}, which has been used in \cite[Section 3]{vairaExistenceBoundStates2013}, \cite[Section 7]{wangStandingWavesCoupled2017} and \cite[Section 4]{wangClassificationQualitativeAnalysis2024} for some related problems. The first main result in this paper reads as follows.

\begin{thm}
\label{thm:symmetry}
Suppose that $\tau, \eta > 0$, $N \in \nat$, $0 < \alpha < N$,
\begin{equation}
\label{H2}
\tag{H$_2$}
2 \leq p, q < 2^*,
\quad
p + q < 2_\alpha^*
\end{equation}
and
$\parens{u, v} \in E \parens{\real^N}$
is a positive solution to \eqref{eqn:Hartree-system:2}. Then there exists $x_0 \in  \real^N$ such that $u (\cdot - x_0)$ and
$v (\cdot - x_0)$ are radial and strictly radially decreasing.
\end{thm}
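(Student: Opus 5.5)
We will use the integral form of the moving planes method of Chen--Li--Ou, applied to the integral representation of the system. Let $G_\tau$ and $G_\eta$ denote the Bessel kernels of $-\Delta + \tau$ and $-\Delta + \eta$. These are positive, radial and strictly radially decreasing. Since $(u,v)$ is a positive classical solution in $E(\real^N)$, we can write
\[
u = \frac{2p}{p+q}\, G_\tau \ast \bigl( (I_\alpha \ast v^q) u^{p-1} \bigr),
\qquad
v = \frac{2q}{p+q}\, G_\eta \ast \bigl( (I_\alpha \ast u^p) v^{q-1} \bigr).
\]
For $\lambda \in \real$ set $T_\lambda = \{x_1 < \lambda\}$, $x^\lambda = (2\lambda - x_1, x')$, $u_\lambda(x) = u(x^\lambda)$, $v_\lambda(x) = v(x^\lambda)$, and $w_\lambda = u_\lambda - u$, $z_\lambda = v_\lambda - v$. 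Reflecting the kernels gives identities of the form
\[
u_\lambda(x) - u(x) = c\int_{T_\lambda} \bigl[G_\tau(x-y) - G_\tau(x^\lambda - y)\bigr]\bigl(f_\lambda(y) - f(y)\bigr)\,\dif y,
\]
where $f = (I_\alpha \ast v^q)\,u^{p-1}$. The bracket is positive for $x, y \in T_\lambda$. There is an analogous identity for $I_\alpha \ast v_\lambda^q - I_\alpha \ast v^q$, namely an integral over $T_\lambda$ of $[I_\alpha(x-y) - I_\alpha(x^\lambda - y)](v_\lambda^q - v^q)(y)$.

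\textbf{Step 1 (start the plane at $-\infty$).} Let $\Sigma_\lambda^u = \{x \in T_\lambda : u(x) > u_\lambda(x)\}$ and define $\Sigma_\lambda^v$ similarly. On $\Sigma_\lambda^u$, where $w_\lambda < 0$, we estimate
\[
f - f_\lambda \le (I_\alpha \ast v^q)\,(u^{p-1} - u_\lambda^{p-1}) + u^{p-1}\bigl(I_\alpha \ast v^q - I_\alpha \ast v_\lambda^q\bigr).
\]
We use the mean value theorem, and here $p, q \ge 2$ matters: $p - 1 \ge 1$ and $q - 1 \ge 1$ give $|u^{p-1} - u_\lambda^{p-1}| \le (p-1)\, u^{p-2}\, |w_\lambda|$ with a bounded, locally integrable factor $u^{p-2}$ rather than a singular one. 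This yields pointwise bounds
\[
|w_\lambda^-| \le C\, G_\tau \ast \bigl( a\,|w_\lambda^-|\,\chi_{\Sigma^u} + u^{p-1}\, I_\alpha \ast (b\,|z_\lambda^-|\,\chi_{\Sigma^v}) \bigr),
\]
with $a = (I_\alpha \ast v^q)\,u^{p-2}$ and $b = v^{q-1}$, and symmetrically for $z_\lambda^-$. Next we take $L^s$ norms, using the Young inequality for $G_\tau$ (which lies in $L^1 \cap L^r$), the Hardy--Littlewood--Sobolev inequality (Proposition~\ref{prop:HLS}) for $I_\alpha$, and H\"older's inequality with exponents of the type $\theta_1, \theta_2$ from \eqref{eqn:thetas}. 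The aim is
\[
\|w_\lambda^-\|_{L^s(T_\lambda)} + \|z_\lambda^-\|_{L^s(T_\lambda)} \le \varepsilon(\lambda)\bigl(\|w_\lambda^-\|_{L^s(T_\lambda)} + \|z_\lambda^-\|_{L^s(T_\lambda)}\bigr),
\]
where $\varepsilon(\lambda)$ is a sum of norms of $u$, $v$ and $I_\alpha \ast u^p$, $I_\alpha \ast v^q$ over $\Sigma_\lambda$. Since $u, v \in H^1 \cap L^\infty$ and $I_\alpha \ast u^p$ is bounded, $\varepsilon(\lambda) \to 0$ as $\lambda \to -\infty$. Hence $w_\lambda, z_\lambda \ge 0$ on $T_\lambda$ for all $\lambda$ very negative.

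\textbf{Step 2 (move the plane).} Let $\lambda_0 = \sup\{\lambda : w_\mu, z_\mu \ge 0 \text{ on } T_\mu \text{ for all } \mu \le \lambda\}$. Decay of $u$ and $v$ shows $\lambda_0 < \infty$. Suppose, towards a contradiction, that $w_{\lambda_0}$ and $z_{\lambda_0}$ are not both identically zero. The integral identities, with strictly positive kernel differences, then give strict positivity $w_{\lambda_0}, z_{\lambda_0} > 0$ in $T_{\lambda_0}$. The key point is the coupling. If $z_{\lambda_0} \not\equiv 0$, then $I_\alpha \ast v_{\lambda_0}^q - I_\alpha \ast v^q > 0$ everywhere in $T_{\lambda_0}$, and this forces $w_{\lambda_0} > 0$ as well; similarly in the other direction.

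Now take $\lambda_k \downarrow \lambda_0$. The sets $\Sigma_{\lambda_k}$ have measure tending to $0$ inside any ball. Outside a large ball their contribution to $\varepsilon$ is small because of decay. So $\varepsilon(\lambda_k) < 1/2$, and the inequality of Step 1 forces $\Sigma_{\lambda_k} = \emptyset$, contradicting the definition of $\lambda_0$. Therefore $u$ and $v$ are symmetric about $\{x_1 = \lambda_0\}$.

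\textbf{Step 3 (conclusion).} Performing this in every direction gives a common center $x_0$: the symmetry planes for $u$ and $v$ coincide because $w$ and $z$ vanish simultaneously. Monotonicity is strict because $w_\lambda > 0$ for $\lambda < \lambda_0$. This yields radial symmetry and strict radial decrease about $x_0$.

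\textbf{Main obstacle.} The hard part is the choice of exponents in Step 1: finding a single $s$ for which Young, HLS and H\"older all close simultaneously under \eqref{H2}, with the nonlocal cross terms, such as $u^{p-1}\, I_\alpha \ast (v^{q-1} z^-)$, controlled by small norms. I would first try $s = \theta_1 p$ and the analogous exponent for $v$, relying on the integrability of $(u,v)$ in every $L^r$, $2 \le r \le \infty$, which follows from \cite[Theorem 1.4]{boerStandingWavesNonlinear2025}.
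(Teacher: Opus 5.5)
Your proposal is correct and follows essentially the same route as the paper. The shared ingredients are the Chen--Li--Ou integral moving-plane method on the Bessel-kernel representation, the reflection identities with strictly positive kernel differences, the mean-value bound that uses $p,q\ge 2$, and the Hardy--Littlewood--Sobolev/H\"older closure with exactly the exponents $\theta_1 p$ and $\theta_2 q$ from \eqref{eqn:thetas}. Your deviations (Young's inequality instead of the $W^{2,r}$ embedding, unreflected norms on $\{x_1<\lambda\}$ instead of the paper's Newton-quotient argument, and strictness from the integral identity instead of Hopf) are cosmetic.

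Two small corrections. The smallness of $\varepsilon(\lambda)$ must come from finite-exponent integrability such as $I_\alpha \ast v^q \in L^{\beta_1}(\real^N)$, not from $L^\infty$ bounds; this matters when $p=2$, since then $u^{p-2}\equiv 1$ contributes no smallness. Also, your pointwise bound for $f-f_\lambda$ on $\Sigma_\lambda^u$ needs a positive part on the term $I_\alpha \ast v^q - I_\alpha \ast v_\lambda^q$.
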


\begin{figure}[!h]
\begin{center}
 \includegraphics[width=7cm]{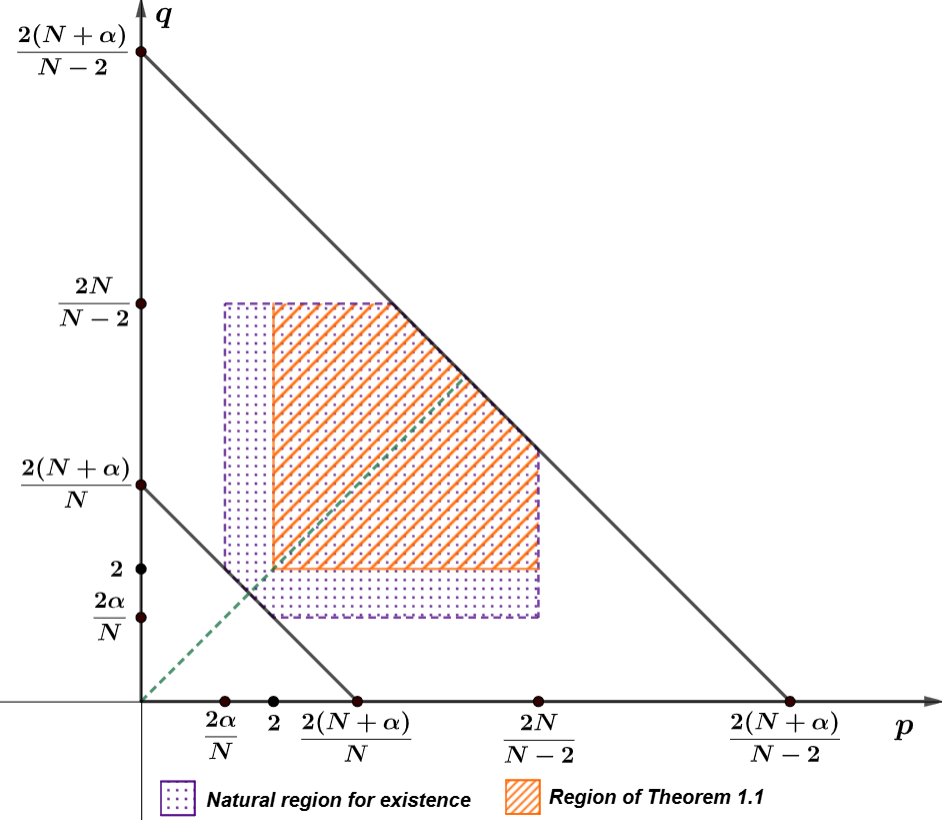}   
\end{center}
\caption{Region on the $(p,q)$-plane corresponding to Theorem \ref{thm:symmetry}. Here $N=3$ and $\alpha=1.9$.}\label{pic1}
\end{figure}

The Chen--Li--Ou approach was similarly employed in \cite[Theorem 2]{maClassificationPositiveSolitary2010} to obtain a symmetry result for positive solutions to \eqref{eqn:Choquard-type} under additional and fundamental integrability conditions on the considered solutions. Such type of conditions are not mentioned in Theorem \ref{thm:symmetry} because, as shown ahead at Section \ref{sect:ma-zhao}, thanks to \cite[Proof of Theorem 1.4]{boerStandingWavesNonlinear2025}, they are always satisfied by solutions in $E \parens{\real^N}$.

In the context of Choquard type equations, the values of parameters for which the moving plane method is applicable is a matter of current interest; see \cite[Section 3.3.3]{morozGuideChoquardEquation2017}. Theorem \ref{thm:symmetry} also applies to the Choquard equation \eqref{eqn:Choquard-type}, so it effectively extends \cite[Theorem 2]{maClassificationPositiveSolitary2010} to the context of systems and we hope that it helps to clarify the domain of application of the Chen--Li--Ou method for this class of problems.

We remark that \eqref{H2} is obtained from \eqref{H1} by further considering the technical restriction $p, q \geq 2$. This restriction ensures positive powers in the bounds established in Lemma \ref{lem:estimates} and it also appears in other papers that used the Chen--Li--Ou method, e.g. in \cite[Theorem 2]{maClassificationPositiveSolitary2010}.

System \eqref{eqn:Hartree-system:2}, with $\tau, \eta$ that are possibly different, naturally appears when considering the normalized system
\[
\begin{cases}
{
	\displaystyle
	- \Delta u + \tau u
	=
	\frac{2 p}{p + q}
	\parens*{I_\alpha \ast |v|^q}
	\abs{u}^{p - 2} u
}
&\text{in} ~ \real^N,
\\
\\
{
	\displaystyle
	- \Delta v + \eta v
	=
	\frac{2 q}{p + q}
	\parens*{I_\alpha \ast |u|^p}
	\abs{v}^{q - 2} v
}
&\text{in} ~ \real^N,
\\
\\
\norm{u}_{L^2}^2 = a;
\quad
\norm{v}_{L^2}^2 = b,
\end{cases}
\]
the unknowns being the Lagrange multipliers
$\tau, \eta \in \real$ and the functions
$u, v \colon \real^N \to \real$.
In particular, Theorem \ref{thm:symmetry} is applicable to pairs
$\parens{u, v} \in E \parens{\real^N}$
that solve the normalized system with Lagrange multipliers
$\tau, \eta > 0$.

We mention that Chen-Li-Ou type arguments are, to the best of our knowledge, still unavailable for \linebreak sublinear problems. Such an extension would  further enhance \cite[Theorem 2]{maClassificationPositiveSolitary2010} and \cite[Theorem 1]{chenClassificationSolutionsSystem2005} by removing the superlinearity assumptions and $p,q \geq 2$ in Theorem \ref{thm:symmetry}.

\subsection{Classification of positive ground states}
\label{intro:classification}

We turn our attention to positive ground states of the system obtained from \eqref{eqn:Hartree-system:2} in the case $p = q$ and $\tau = \eta$, that is,
\begin{equation}
\label{eqn:Choquard-system}
\begin{cases}
- \Delta u + \tau u
=
\parens*{I_\alpha \ast \abs{v}^p}
\abs{u}^{p - 2} u
&\text{in} ~ \real^N,
\\
- \Delta v + \tau v
=
\parens*{I_\alpha \ast \abs{u}^p}
\abs{v}^{p - 2} v
&\text{in} ~ \real^N.
\end{cases}
\end{equation}

The very basic question when investigating \eqref{eqn:Choquard-system} is that if it is indeed a system. In other words, are there positive solutions to \eqref{eqn:Choquard-system} with $u\neq v$\,? We solve this question in the framework of ground state solutions.

Before explaining our notion of ground state, we need to recall the variational characterization of solutions in
$E \parens{\real^N}$ to \eqref{eqn:Choquard-system}. Notice that we postponed this discussion up until now because the technique used to prove Theorem \ref{thm:symmetry} is completely independent from this variational characterization.

To start, notice that \eqref{H1} with $p=q$ is equivalent to $1 + \frac{\alpha}{N} < p < \frac{2_\alpha^*}{2}$.  Moreover, if $1 + \frac{\alpha}{N} < p < \frac{2_\alpha^*}{2}$, then there is a well-defined \emph{action functional}
$J \colon E \parens{\real^N} \to \real$
given by
\begin{equation}
\label{eqn:action-functional}
J \parens{u, v}
:=
\frac{1}{2}
\int_{\real^N}
	\abs{\nabla u}^2 + \abs{\nabla v}^2 + \tau \parens{u^2 + v^2}
\dif \mu
-
\frac{1}{p}
\int_{\real^N}
	\parens*{I_\alpha \ast \abs{u}^p} \abs{v}^q
\dif \mu.
\end{equation}
Usual arguments show that $J$ is a functional of class $C^1$. Furthermore, $\parens{u, v} \in E \parens{\real^N}$ is a solution to \eqref{eqn:Choquard-system} if, and only if,
$J' \parens{u, v} = 0$. In this context, we say that
$\parens{u, v}$ is a \emph{ground state} of \eqref{eqn:Choquard-system} when $\parens{u, v}$ solves the following minimization problem:
\begin{equation}
\label{eqn:min-N}
\begin{cases}
J \parens{u, v}
=
c
:=
\inf_{
	\parens{\widetilde{u}, \widetilde{v}}
	\in
	\N_J
}
J \parens{\widetilde{u}, \widetilde{v}};
\\
\parens{u, v} \in \N_J,
\end{cases}
\end{equation}
where
\[
\N_J
:=
\set*{
	\parens{u, v}
	\in
	E \parens{\real^N} \setminus \set*{\parens{0, 0}}:
	\langle J' \parens{u, v}, (u, v)\rangle = 0
}
\]
is the \emph{Nehari manifold} associated with $J$; see \cite[Definition 2.5 and Section 3]{boerStandingWavesNonlinear2025}.

Similarly, with $1 + \frac{\alpha}{N} < p < \frac{2_\alpha^*}{2}$, the Choquard equation \eqref{eqn:Choquard-type} is naturally associated with the \emph{action functional} 
defined as $\A \colon H^1 \parens{\real^N} \to \real$
\[
\A \parens{u}
=
\frac{1}{2}
\int_{\real^N}
	\abs{\nabla u}^2
	+
	\tau u^2
\dif \mu
-
\frac{1}{2 p}
\int_{\real^N}
	\parens*{I_\alpha \ast \abs{u}^p} \abs{u}^p
\dif \mu
\]
and we understand a \emph{ground state} of \eqref{eqn:Choquard-type} to be a solution to the following minimization problem:
\begin{equation}
\label{eqn:min-N_A}
\begin{cases}
\A \parens{u}
=
\T
:=
\inf_{v \in \N_A} \A \parens{v};
\\
u \in \N_\A,
\end{cases}
\end{equation}
where
\[
\N_\A
:=
\set*{
	u
	\in
	H^1 \parens{\real^N} \setminus \set{0}:
	\langle \A' \parens{u}, u\rangle = 0
}
\]
is the \emph{Nehari manifold} associated with
$\A$.

Our last main result is that we can classify the positive ground states of \eqref{eqn:Choquard-system} in function of the positive ground states of \eqref{eqn:Choquard-type}. This result is proved in Section \ref{classification} with arguments inspired by those in the proof of \cite[Theorem 1.4 (ii)]{wangStandingWavesCoupled2017}.
\begin{thm}
\label{thm:classification}
Suppose that $\tau > 0$, $N \in \nat$, $0 < \alpha < N$ and
$1 + \frac{\alpha}{N} < p < \frac{2^*_\alpha}{2}$.
Then $\parens{u, v}$ is a positive ground state solution of \eqref{eqn:Choquard-system} if, and only if, $u = v$ is a positive ground state solution of \eqref{eqn:Choquard-type}. 
\end{thm}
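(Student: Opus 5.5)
The plan is to compare the two ground state levels $c$ and $\T$ and show that $c = 2\T$. Then use the equality cases of the inequalities involved to force $u = v$.

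Write $\|w\|^2 := \int_{\real^N} |\nabla w|^2 + \tau w^2 \dif \mu$ and $D(f,g) := \int_{\real^N} (I_\alpha \ast f) g \dif \mu$. The Riesz kernel is positive definite, since its Fourier transform is positive. Hence $D(f,g) \le D(f,f)^{1/2} D(g,g)^{1/2}$ for $f,g \ge 0$, with equality (for $f, g \neq 0$) if and only if $f$ and $g$ are proportional.

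\textbf{Step 1: the levels.} On $\N_J$ we have $\|u\|^2 + \|v\|^2 = 2D(|u|^p,|v|^p)$, so
\[
J(u,v) = \parens*{\tfrac12 - \tfrac1{2p}} \parens*{\|u\|^2+\|v\|^2}
= \parens*{\tfrac12-\tfrac1{2p}}\, 2D(|u|^p,|v|^p).
\]
Similarly, on $\N_\A$ we have $\A(w) = (\tfrac12-\tfrac1{2p})\|w\|^2$, and $\T$ is given by the scaling formula
\[
\T = \parens*{\tfrac12-\tfrac1{2p}} \inf_{w\neq 0} \parens*{\frac{\|w\|^{2p}}{D(|w|^p,|w|^p)}}^{1/(p-1)} .
\]
The pair $(w,w)$ lies in $\N_J$ whenever $w \in \N_\A$, and $J(w,w) = 2\A(w)$. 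Therefore $c \le 2\T$.

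For the reverse inequality, take $(u,v) \in \N_J$. By Cauchy--Schwarz, the definition of $\T$, and AM--GM,
\[
2D(|u|^p,|v|^p) \le 2 D_u^{1/2} D_v^{1/2},
\qquad D_u \le \parens*{\tfrac12-\tfrac1{2p}}^{1-p} \T^{1-p} \|u\|^{2p},
\]
and similarly for $v$. This gives
\[
\|u\|^2+\|v\|^2 \le 2 K \|u\|^p\|v\|^p \le 2K\parens*{\frac{\|u\|^2+\|v\|^2}{2}}^{p},
\qquad K = \parens*{\parens*{\tfrac12-\tfrac1{2p}}^{-1}\T}^{(1-p)}.
\]
Solving for $\|u\|^2+\|v\|^2$ yields $\|u\|^2 + \|v\|^2 \ge 2(\tfrac12-\tfrac1{2p})^{-1}\T$. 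Hence $J(u,v) \ge 2\T$, and so $c = 2\T$.

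\textbf{Step 2: equality analysis.} Suppose $(u,v)$ is a positive ground state. Then every inequality above is an equality.

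\emph{AM--GM equality} gives $\|u\| = \|v\|$.

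\emph{Cauchy--Schwarz equality} gives $u^p = \lambda v^p$ for some $\lambda > 0$, so $u = \lambda^{1/p} v$.

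Combining the two, $\lambda = 1$ and $u = v$. The equation for $u$ then becomes \eqref{eqn:Choquard-type}, and $\A(u) = \tfrac12 J(u,u) = \T$. So $u$ is a positive ground state of \eqref{eqn:Choquard-type}.

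Conversely, if $w > 0$ is a ground state of \eqref{eqn:Choquard-type}, then $(w,w)$ solves \eqref{eqn:Choquard-system}, lies in $\N_J$, and satisfies $J(w,w) = 2\T = c$. So it is a positive ground state.

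\textbf{Main obstacle.} The delicate point is the Cauchy--Schwarz inequality with its equality case for $D$. It requires positive definiteness of $I_\alpha$ on the relevant class of functions, $L^{2N/(N+\alpha)}$, where HLS applies. This follows from writing $I_\alpha = I_{\alpha/2} \ast I_{\alpha/2}$, so that
\[
D(f,g) = \int_{\real^N} (I_{\alpha/2}\ast f)(I_{\alpha/2}\ast g) \dif \mu .
\]
Equality in Cauchy--Schwarz then gives $I_{\alpha/2}\ast |u|^p = \lambda\, I_{\alpha/2} \ast |v|^p$. Injectivity of the Riesz potential on this class yields $u^p = \lambda v^p$.

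A secondary check is the formula for $\T$ via Nehari scaling. For $w \neq 0$ the map $t \mapsto \A(tw)$ has a unique critical point, and this gives the displayed infimum.
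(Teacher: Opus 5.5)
Your proof is correct. It rests on the same two ingredients as the paper. The first is the Cauchy--Schwarz inequality for $D$ with its equality case: Lemma~\ref{lem:CS} proves it by exactly your factorization, since $\abs{x}^{-\frac{2N-\alpha}{2}}$ is, up to a constant, the kernel of $I_{\alpha/2}$. The second is the scaling characterization of the Choquard level, which the paper writes as $\T = (\frac12-\frac1{2p})\,\mathcal{S}^{p/(p-1)}$; your bound on $D_u$ is just $\mathcal{S}D_u^{1/p} \le \norm{u}^2$.

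The organization differs. The paper fixes a positive ground state $(u,v)$ and uses that it solves the system, so each equation supplies its own Nehari identity, i.e.\ $(u,v)\in\widetilde{\N}$. With $a = D(u^p,u^p)$ and $b = D(v^p,v^p)$ this gives $\mathcal{S}a^{1/p}\le\sqrt{ab}$ and $\mathcal{S}b^{1/p}\le\sqrt{ab}$. Combined with $a^{1/p}+b^{1/p}\le 2\mathcal{S}^{1/(p-1)}$ (from $c\le 2\T$), the $\Gamma_\pm$ argument forces $a=b=\mathcal{S}^{p/(p-1)}$; that argument is AM--GM for $(a^{1/p},b^{1/p})$ in disguise. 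Only then is the equality case of Cauchy--Schwarz invoked.

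You use only the summed constraint defining $\N_J$ and apply AM--GM to $(\norm{u},\norm{v})$. This gives $J\ge 2\T$ on all of $\N_J$, hence $c=2\T$, before any equality analysis. Your route buys several things:
\begin{itemize}
\item you need neither $\widetilde{\N}$ nor Lemma~\ref{lem:action-lvl-Nehari}, which imports the existence of a ground state of the system from earlier work;
\item your derivation of $u=v$ never uses the Euler--Lagrange equations;
\item the converse direction is immediate. The paper does not spell it out, and in its setup $c=2\T$ only becomes available after applying the forward direction to a ground state whose existence is quoted.
\end{itemize}
The paper's argument is tailored to solutions, since it needs $F_1=F_2=0$ separately, whereas yours is a statement about the whole Nehari manifold.
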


Regarding Theorem \ref{thm:classification} and a previous discussion in the section, we pose the following natural problem.

\medbreak
\noindent\textbf{Open problem 1.} Under the conditions in Theorem \ref{thm:classification}, all positive solutions of \eqref{eqn:Choquard-system} are of the form $(u,u)$ where $u$ is a positive solution of \eqref{eqn:Choquard-type}.
\medbreak

An indication that such result is expected is that a similar result is known to hold for a related weakly coupled cooperative system in the case $p = q = 2$; see \cite[Theorem 1.4]{wangStandingWavesCoupled2017}. Furthermore, a classification of positive solutions up to rescalings was already obtained in the case $\tau = \eta = 0$ in the Harly-Littlewood-Sobolev critical case $p=q=\frac{2_{\alpha}^*}{2}$; see \cite[Theorem 1.5]{wangClassificationQualitativeAnalysis2024}. Regarding Open problem 1, one must observe that $(u,-u)$ solves \eqref{eqn:Choquard-system} provided $u$ solves \eqref{eqn:Choquard-type}. This observation points to a broader open problem.

\medbreak
\noindent\textbf{Open problem 2.} Suppose the conditions in Theorem \ref{thm:classification}. Then $(u,v)$ solves \eqref{eqn:Choquard-system} if, and only if $v= \pm u$, where $u$ solves \eqref{eqn:Choquard-type}.
\medbreak
 
To finish this introduction we mention that throughout this paper,  given $\gamma \in [1, \infty]$, we let $\gamma' \in [1, \infty]$ denote its Hölder conjugate exponent.

\section{Symmetry of positive solutions}
\label{symmetry}

\subsection{The Hardy--Littlewood--Sobolev inequality}

The Hardy--Littlewood--Sobolev inequality plays a central role in our arguments, so we begin by recalling it as stated in \cite[Theorem 4.3]{liebAnalysis2001}.

\begin{prop}
\label{prop:HLS}
Suppose that $N \in \nat$ and $0 < \alpha < N$.
\begin{enumerate}
\item
If $\theta_1, \theta_2 \in \oci{1, \infty}$ are such that
$
\frac{1}{\theta_1} + \frac{1}{\theta_2}
=
\frac{N + \alpha}{N}
$,
then there exists
$C_{N, \alpha, \theta_1} > 0$
such that
\[
\abs*{
	\int_{\real^N} \int_{\real^N}
		\frac{f \parens{x} g \parens{y}}
			{\abs{x - y}^{N - \alpha}}
	\dif x \dif y
}
\leq
C_{N, \alpha, \theta_1}
\norm{f}_{L^{\theta_1}}
\norm{g}_{L^{\theta_2}}
\]
for every
$f \in L^{\theta_1} \parens{\real^N}$
and
$g \in L^{\theta_2} \parens{\real^N}$.
\item
Given
$r \in \ooi{1, \frac{N}{\alpha}}$,
there exists $C_{N, \alpha, r} > 0$ such that
\[
\int_{\real^N}
	\abs{I_\alpha \ast u}^
		{\frac{N r}{N - \alpha r}}
\dif \mu
\leq
C_{N, \alpha, r}
\parens*{
	\int_{\real^N} \abs{u}^r \dif \mu
}^{\frac{N}{N - \alpha r}}
\]
for every $u \in L^r \parens{\real^N}$.
\end{enumerate}
\end{prop}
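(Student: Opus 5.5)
Since the statement is quoted from \cite[Theorem 4.3]{liebAnalysis2001}, I would give a self-contained proof only along the classical Hedberg route: prove item 2 first, then deduce item 1 by duality.

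\emph{Item 2.} Fix $r \in \ooi{1, \frac{N}{\alpha}}$ and set $s := \frac{N r}{N - \alpha r}$. It suffices to prove $\norm{I_\alpha \ast u}_{L^s} \leq C \norm{u}_{L^r}$, since raising this to the power $s$ gives the stated bound with exponent $\frac{s}{r} = \frac{N}{N - \alpha r}$. It is enough to treat $u \geq 0$, because $\abs{I_\alpha \ast u} \leq I_\alpha \ast \abs{u}$.

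Fix $x$ and a radius $R > 0$, and split the integral defining $I_\alpha \ast u (x)$ into $\abs{x - y} < R$ and $\abs{x - y} \geq R$.
\begin{itemize}
\item Near part: summing over the dyadic annuli $2^{-k-1} R \leq \abs{x - y} < 2^{-k} R$ gives the bound $C R^\alpha \, M u (x)$, where $M$ is the Hardy--Littlewood maximal function. The geometric series converges because $\alpha > 0$.
\item Far part: by Hölder, it is at most $\norm{u}_{L^r} \norm{\abs{\cdot}^{\alpha - N} 1_{\set{\abs{\cdot} \geq R}}}_{L^{r'}} = C R^{\alpha - N / r} \norm{u}_{L^r}$. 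The $L^{r'}$ norm is finite precisely because $(N - \alpha) r' > N$, which is equivalent to $r < \frac{N}{\alpha}$.
\end{itemize}
Choose $R$ to balance the two terms, namely $R^{N/r} = \norm{u}_{L^r} / M u (x)$. This yields the pointwise Hedberg inequality
\[
\abs{I_\alpha \ast u (x)} \leq C \parens*{M u (x)}^{1 - \frac{\alpha r}{N}} \norm{u}_{L^r}^{\frac{\alpha r}{N}}.
\]
Raise it to the power $s$ and note that $s \parens*{1 - \frac{\alpha r}{N}} = r$. Integrating then gives
\[
\norm{I_\alpha \ast u}_{L^s}^s \leq C \norm{u}_{L^r}^{\frac{\alpha r s}{N}} \norm{M u}_{L^r}^r.
\]
The Hardy--Littlewood maximal theorem, valid since $r > 1$, gives $\norm{M u}_{L^r} \leq C \norm{u}_{L^r}$. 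Since $\frac{\alpha r s}{N} + r = s$, the desired bound follows.

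\emph{Item 1.} First observe that neither exponent can be $\infty$. If, say, $\theta_1 = \infty$, then $\frac{1}{\theta_2} = \frac{N + \alpha}{N} > 1$, contradicting $\theta_2 \geq 1$. Hence $\theta_1, \theta_2 \in \ooi{1, \infty}$. Moreover, $\frac{1}{\theta_2} = \frac{N + \alpha}{N} - \frac{1}{\theta_1} > \frac{\alpha}{N}$, so $\theta_2 < \frac{N}{\alpha}$, and similarly $\theta_1 < \frac{N}{\alpha}$.

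By Fubini and Hölder, the double integral is bounded by a constant times $\norm{f}_{L^{\theta_1}} \norm{I_\alpha \ast \abs{g}}_{L^{\theta_1'}}$. The normalization constant of $I_\alpha$ is absorbed into $C_{N, \alpha, \theta_1}$. Apply item 2 with $r = \theta_2$; the resulting exponent satisfies
\[
\frac{1}{s} = \frac{1}{\theta_2} - \frac{\alpha}{N} = 1 - \frac{1}{\theta_1} = \frac{1}{\theta_1'},
\]
by the constraint on $\theta_1, \theta_2$. This yields the claim.

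The only nontrivial external input is the maximal theorem, which I would take as standard. The main point requiring care is the optimization in $R$ together with the finiteness conditions on the two kernel pieces, which is exactly where $0 < \alpha$ and $r < \frac{N}{\alpha}$ enter. If one prefers to avoid the maximal function, an alternative is a weak-type $(r, s)$ bound at all admissible $r$ followed by Marcinkiewicz interpolation, but Hedberg's argument is shorter.
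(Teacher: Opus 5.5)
Your argument is correct, but it does not follow the same route as the source the paper relies on. The paper gives no proof and simply quotes \cite[Theorem 4.3]{liebAnalysis2001}. There, the bilinear inequality (item 1) is proved first and directly: $f$, $g$ and the kernel $\abs{x}^{\alpha - N}$ are written through the layer-cake representation, and the resulting integrals of products of characteristic functions are estimated by elementary bounds on the measures of the superlevel sets. The convolution form (item 2) is then an equivalent reformulation by duality.

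You go in the opposite direction. Hedberg's pointwise bound $\abs{I_\alpha \ast u} \leq C (Mu)^{1 - \alpha r/N} \norm{u}_{L^r}^{\alpha r/N}$ together with the maximal theorem gives item 2, and item 1 then follows from H\"older's inequality with exponent $s = \theta_1'$. Your exponent bookkeeping is right: $(N - \alpha) r' > N$ iff $r < N/\alpha$, $s (1 - \alpha r/N) = r$, $\alpha r s/N + r = s$, and $1/\theta_2 - \alpha/N = 1/\theta_1'$. Your observation that $\theta_i = \infty$ is incompatible with the constraint, so that in fact $\theta_1, \theta_2 \in \ooi{1, N/\alpha}$, correctly disposes of the slightly loose range $\oci{1, \infty}$ in the statement.

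As for the trade-off: your proof is shorter but uses the Hardy--Littlewood maximal theorem as a black box. The Lieb--Loss argument is fully elementary, and it is the setting in which they also identify the sharp constant in the diagonal case $\theta_1 = \theta_2$. Since the paper only uses the existence of some constant, either route suffices.

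One small detail should be stated explicitly in the choice of $R$. Take $u \not\equiv 0$, so that $Mu > 0$ everywhere. Note also that $Mu < \infty$ a.e., and the pointwise bound is trivial wherever $Mu = \infty$; this also shows that $I_\alpha \ast u$ is finite a.e.
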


\subsection{The Bessel potential}
\label{Bessel}

Our present goal is to recall a systematic procedure to obtain a strong solution to the equation
\begin{equation}
\label{potentials:tau}
- \Delta u + \tau u = f \quad \text{in} \quad \real^N,
\end{equation}
where $\tau > 0$, $f \in L^r \parens{\real^N}$ and
$1 < r < \infty$. As in \cite[Chapter V, Section 3]{steinSingularIntegralsDifferentiability2016} (see also \cite[Section 6.1.2]{grafakosModernFourierAnalysis2014}), we define the \emph{Bessel potential}
$
G_2
\colon
\real^N \setminus \set{0} \to \ooi{0, \infty}
$
as
\[
G_2 (x)
=
\frac{1}{4 \pi}
\int_0^\infty
	s^{-\frac{N}{2}}
	\exp\left(- \frac{\pi \abs{x}^2}{s}
	-
	\frac{s}{4 \pi}\right)
\dif s.
\]
Given $\tau > 0$, we define the rescaling of $G_2$ given by
\[
G_2^\tau \parens{x}
:=
\tau^{\frac{N - 2}{2}}
G_2 \parens*{\sqrt{\tau} x}.
\]

Convolution with $G_2^\tau$ returns functions with increased regularity and gives a representation formula for the solution of \eqref{potentials:tau} as stated more precisely in the next result.
\begin{lem}
[{see \cite[Chapter V, Section 3.3]{steinSingularIntegralsDifferentiability2016}}]
\label{lem:G_2-embedding}
Suppose that $N \in \nat$, $\tau > 0$ and $1 < r < \infty$. Then
\[
L^r \parens{\real^N} \ni f
\mapsto
G_2^\tau \ast f \in W^{2, r} \parens{\real^N}
\]
is a continuous linear operator. Moreover, $u := G_2^\tau \ast f \in W^{2, r} \parens{\real^N}$
is the strong solution to \eqref{potentials:tau}.

\end{lem}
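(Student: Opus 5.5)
The statement is the classical Bessel potential theory result, so the plan is to reduce to $\tau = 1$ by scaling and then use the Fourier multiplier description of $G_2$.

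\textbf{Step 1: Fourier transform of $G_2$.} With the normalization $\hat f(\xi) = \int f(x) e^{-2\pi i x\cdot\xi}\,dx$, I would verify that $G_2 \in L^1(\real^N)$, that $G_2 > 0$, and that
\[
\widehat{G_2}(\xi) = \parens*{1 + 4\pi^2 \abs{\xi}^2}^{-1}.
\]
This follows by inserting the subordination formula for $G_2$ into the Fourier transform and applying Fubini. For fixed $s$, the Gaussian $s^{-N/2} e^{-\pi\abs{x}^2/s}$ has transform $e^{-\pi s \abs{\xi}^2}$. Integrating against $\frac{1}{4\pi} e^{-s/(4\pi)}$ then gives the multiplier $(1+4\pi^2\abs{\xi}^2)^{-1}$. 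The same computation at $\xi = 0$ shows $\norm{G_2}_{L^1} = 1$.

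\textbf{Step 2: rescaling to general $\tau$.} By the rescaling defining $G_2^\tau$, its transform is a dilation of $\widehat{G_2}$ multiplied by a power of $\tau$. With the paper's normalization this yields $\widehat{G_2^\tau}(\xi) = c_\tau (\tau + 4\pi^2\abs{\xi}^2)^{-1}$, and I would fix $c_\tau = 1$ by checking the constant. It then suffices to treat $\tau = 1$, since the general case follows by dilation $f \mapsto f(\cdot/\sqrt\tau)$, which preserves $L^r$ and $W^{2,r}$ up to constants.

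\textbf{Step 3: $L^r$ bounds.} Young's inequality gives $\norm{G_2 \ast f}_{L^r} \le \norm{f}_{L^r}$. For the derivatives, the multipliers of $\partial_j(G_2\ast f)$ and $\partial_j\partial_k(G_2\ast f)$ are $2\pi i\xi_j/(1+4\pi^2\abs{\xi}^2)$ and $-4\pi^2\xi_j\xi_k/(1+4\pi^2\abs{\xi}^2)$. These are Mikhlin multipliers: smooth, with $\abs{\partial^\beta m(\xi)} \lesssim \abs{\xi}^{-\abs\beta}$. The second one is also the Riesz-transform multiplier $\xi_j\xi_k/\abs\xi^2$ times $4\pi^2\abs\xi^2/(1+4\pi^2\abs\xi^2)$, which is itself a Mikhlin multiplier. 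By the Mikhlin--Hörmander theorem, valid for $1<r<\infty$, these operators are bounded on $L^r$. This gives $\norm{G_2^\tau \ast f}_{W^{2,r}} \le C\norm{f}_{L^r}$, and linearity is clear. This step is the main obstacle: it needs singular integral theory and fails at $r=1,\infty$. I would cite Stein for it rather than reprove it.

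\textbf{Step 4: strong solution.} For $f$ in the Schwartz class, taking Fourier transforms gives $(-\Delta+\tau)(G_2^\tau\ast f) = f$ pointwise. For general $f\in L^r$, approximate by Schwartz functions $f_n\to f$ in $L^r$. By Step 3, $u_n = G_2^\tau\ast f_n \to u$ in $W^{2,r}$, so $-\Delta u_n + \tau u_n \to -\Delta u + \tau u$ in $L^r$, and the equation holds a.e. with weak derivatives.

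For uniqueness, the difference of two $W^{2,r}$ solutions solves the homogeneous equation. Its distributional Fourier transform is supported where $\tau+4\pi^2\abs\xi^2=0$, which is the empty set. Hence the difference vanishes, and $u$ is \emph{the} strong solution.
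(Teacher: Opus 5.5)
Your proposal is correct and is essentially the standard Bessel-potential argument in the source the paper cites; the paper itself gives no proof, only the reference to Stein. Your constant check also works out: $\widehat{G_2^\tau}(\xi)=\tau^{\frac{N-2}{2}}\tau^{-\frac{N}{2}}\parens{1+4\pi^2\abs{\xi}^2/\tau}^{-1}=\parens{\tau+4\pi^2\abs{\xi}^2}^{-1}$, so $c_\tau=1$, and Stein handles the second derivatives through exactly the decomposition you mention, $\partial_j\partial_k\parens{G_2\ast f}=R_jR_k\parens{f-G_2\ast f}$ with $1-\widehat{G_2}$ the transform of the finite measure $\delta_0-G_2\,\dif x$, so only the $L^r$-boundedness of the Riesz transforms is needed rather than the general Mikhlin--H\"ormander theorem.
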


\subsection{Notation for the moving plane method}

Let us fix the notation to be used in the moving plane method. For $\lambda \in \real$, set
\[
\Sigma_\lambda
=
\set*{
	\parens{x_1, \ldots, x_N} \in \real^N
	:
	x_1 \geq \lambda
}
\]
and for $x \in \real^N$, consider
\[
x^\lambda
:=
\parens{2 \lambda - x_1, x_2, \ldots, x_N}
\in
\real^N,
\]
that is, $x^\lambda$ denotes the point obtained by reflection with respect to
\[
\partial \Sigma_\lambda
=
\set*{
	\parens{\lambda, y_2, \ldots, y_N}
	:
	y_2, \ldots, y_N \in \real
}.
\]
Given
$u \colon \real^N \to \real$, let
$u_\lambda \colon \real^N \to \real$
be given by
$u_\lambda \parens{x} = u(x^\lambda)$
and set
\[
\Sigma^u_\lambda
=
\set*{
	x \in \Sigma_\lambda:
	u_\lambda \parens{x} > u \parens{x}
}.
\]
We finish the section with a simple but important remark, which follows the Dominated Convergence Theorem.

\begin{lem}
\label{lem:limit-of-integrals}
If $1 \leq s < \infty$ and
$u \in L^s \parens{\real^N}$, then
$
\norm{u_\lambda}_{L^s \parens{\Sigma_\lambda}} 
\to
0
$
as $\lambda \to - \infty$.
\end{lem}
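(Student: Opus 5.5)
The statement is that $\norm{u_\lambda}_{L^s(\Sigma_\lambda)} \to 0$ as $\lambda \to -\infty$. My plan is to move the integral back to the unreflected function by a change of variables and then apply the Dominated Convergence Theorem, as the remark preceding the lemma suggests.

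The key step is the substitution $y = x^\lambda$. This map is an isometric reflection of $\real^N$, so it has Jacobian of absolute value one. It sends $\Sigma_\lambda = \set{x_1 \geq \lambda}$ onto $\set{y_1 \leq \lambda}$, since $y_1 = 2\lambda - x_1 \leq \lambda$ exactly when $x_1 \geq \lambda$. Hence
\[
\int_{\Sigma_\lambda} \abs{u_\lambda(x)}^s \dif x
=
\int_{\set{y_1 \leq \lambda}} \abs{u(y)}^s \dif y
=
\int_{\real^N} \abs{u(y)}^s \, \chi_{\set{y_1 \leq \lambda}}(y) \dif y .
\]

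Now take any sequence $\lambda_n \to -\infty$. For each fixed $y$, the indicator $\chi_{\set{y_1 \leq \lambda_n}}(y)$ equals $0$ as soon as $\lambda_n < y_1$, so the integrands tend to $0$ pointwise. They are dominated by $\abs{u}^s$, which lies in $L^1(\real^N)$ because $u \in L^s(\real^N)$ and $s < \infty$. The Dominated Convergence Theorem then gives that the integrals tend to $0$. Since the sequence was arbitrary, $\norm{u_\lambda}_{L^s(\Sigma_\lambda)}^s \to 0$ as $\lambda \to -\infty$, and taking $s$-th roots yields the claim.

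An alternative to the sequential argument is to note that $\lambda \mapsto \int_{\set{y_1 \leq \lambda}} \abs{u}^s$ is monotone and use continuity of the measure $A \mapsto \int_A \abs{u}^s$ along the decreasing sets $\set{y_1 \leq \lambda}$, whose intersection is empty. I expect no genuine obstacle. The only points needing care are the orientation of the reflected half-space and the role of $s < \infty$, which is what makes $\abs{u}^s$ integrable and the argument meaningful; the case $s = \infty$ is indeed excluded in the statement.
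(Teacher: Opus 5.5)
Your proposal is correct and takes essentially the paper's approach: the paper only remarks that the lemma follows from the Dominated Convergence Theorem. Your reflection $y = x^\lambda$, which turns the integral into $\int_{\{y_1 \leq \lambda\}} \abs{u(y)}^s \dif y$, followed by dominated convergence with $\abs{u}^s \in L^1$ as the dominating function, supplies exactly the details behind that remark.
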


\subsection{Technical preliminaries}

Throughout this section, we suppose that the hypotheses of Theorem \ref{thm:symmetry} are satisfied. In particular,
$\parens{u, v} \in E \parens{\real^N}$
always denotes a positive solution to \eqref{eqn:Hartree-system:2}. Furthermore, we set
\[
\phi_v = I_\alpha \ast v^q
\quad \text{and} \quad
\psi_u = I_\alpha \ast u^p
\]
to simplify the notation. Similarly as in \cite[p. 501]{vairaExistenceBoundStates2013} or \cite[Theorem 1.1]{wangClassificationQualitativeAnalysis2024},
our first goal is to show that $\parens{u, v}$ also solves an integral system.

\begin{lem}
\label{lem:integral-system}
It holds that
\begin{equation}
\label{lem:integral-system:1}
u \parens{x}
=
\frac{2 p}{p + q}
G_2^\tau \ast \parens{\phi_v u^{p - 1}} \parens{x}
=
\frac{2 p}{p + q}
\int_{\real^N}
	G_2^\tau \parens{x - y}
	\phi_v \parens{y} 
	u \parens{y}^{p - 1}
\dif y
\end{equation}
and
\begin{equation}
\label{lem:integral-system:2}
v \parens{x}
=
\frac{2 q}{p + q}
G_2^\eta \ast \parens{\psi_u v^{q - 1}} \parens{x}
=
\frac{2 q}{p + q}
\int_{\real^N}
	G_2^\eta \parens{x - y}
	\psi_u \parens{y} 
	v \parens{y}^{q - 1}
\dif y
\end{equation}
for every $x \in \real^N$.
\end{lem}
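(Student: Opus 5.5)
The plan is to show that $f := \frac{2p}{p+q}\phi_v u^{p-1}$ lies in some $L^r(\real^N)$ with $1<r<\infty$. Lemma \ref{lem:G_2-embedding} then produces $w := G_2^\tau \ast f \in W^{2,r}(\real^N)$, a strong solution of $-\Delta w + \tau w = f$. Next, we identify $w$ with $u$ by uniqueness, and finally upgrade the resulting almost-everywhere identity to an identity at every point. The argument for $v$ is symmetric, with $G_2^\eta$, $\psi_u$ and $q$ in place of $G_2^\tau$, $\phi_v$ and $p$.

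\emph{Step 1: integrability of $f$.} Take $\theta_1,\theta_2$ as in \eqref{eqn:thetas}. Since $v\in H^1$ and $2<\theta_2 q<2^*$, Sobolev gives $v^q\in L^{\theta_2}$. Proposition \ref{prop:HLS}(2) with $r=\theta_2\in\ooi{1,N/\alpha}$ then gives $\phi_v\in L^{s}$ with $\frac1s=\frac1{\theta_2}-\frac{\alpha}{N}=1-\frac1{\theta_1}$, i.e. $s=\theta_1'$. Also $u^{p-1}\in L^{\theta_1 p/(p-1)}$, since $u\in L^{\theta_1 p}$. By Hölder, $f\in L^r$ with
\[
\frac1r=\frac{1}{\theta_1'}+\frac{p-1}{\theta_1 p}=1-\frac{1}{\theta_1 p}.
\]
Because $\theta_1 p>2$, this $r$ satisfies $1<r<\infty$. (Alternatively, one can invoke the integrability and regularity from \cite[Proof of Theorem 1.4]{boerStandingWavesNonlinear2025}.)

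\emph{Step 2: identification $u=w$ a.e.} Both $u$ and $w$ are weak solutions of $-\Delta z+\tau z=f$ in the distributional sense. Setting $z=u-w$, we get $z\in L^2+L^r$ with $-\Delta z+\tau z=0$ in $\mathcal S'(\real^N)$. Taking Fourier transforms gives $(4\pi^2|\xi|^2+\tau)\hat z=0$, and since $\tau>0$ the multiplier never vanishes, so $\hat z=0$ and $z=0$. This uses only that $L^2$ and $L^r$ embed in tempered distributions. Hence $u=G_2^\tau\ast f$ almost everywhere.

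\emph{Step 3: pointwise identity.} This is the main obstacle, because the lemma claims the identity for every $x$. We know $u$ is classical, hence continuous. I would show that $G_2^\tau\ast f$ is finite and continuous everywhere. To do this, first bootstrap $f$ into $L^{r_2}$ for some $r_2>N/2$: from the equation, $u=G_2^\tau\ast f\in W^{2,r}$, which improves the integrability of $u$. The HLS bounds keep $\phi_v$ in a range of $L^s$ spaces, including $L^\infty$ once $v^q\in L^{a}\cap L^{b}$ with $a<N/\alpha<b$. Iterating finitely many times gives $f\in L^{r}\cap L^{\infty}$.

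Then, since $G_2^\tau\in L^1$ with integrable singularity and exponential decay, $G_2^\tau\ast f$ is continuous when $f\in L^1_{loc}$-bounded, indeed when $f\in L^\infty\cap L^r$ (split $G_2^\tau=G\chi_{B_1}+G\chi_{B_1^c}$, with the first part in $L^1$ and the second in $L^{r'}$). Two continuous functions equal almost everywhere coincide everywhere, which gives \eqref{lem:integral-system:1}. Alternatively, a simpler route is to note that $f\ge0$ is continuous, and to use Fatou/monotone convergence with mollified $f$ to get pointwise equality at every Lebesgue point; every point is a Lebesgue point by continuity.
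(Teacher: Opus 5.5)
Your proof is correct, but it identifies $u$ with $G_2^\tau \ast f$ by a different route than the paper.

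\textbf{The paper's route.} The paper argues by duality. For $\xi\in C_c^\infty(\real^N)$ it uses $\Gamma=G_2^\tau\ast\xi\in H^2$ (by Lemma~\ref{lem:G_2-embedding}) as a test function in the weak formulation. This gives $\int u\xi=\int u(-\Delta\Gamma+\tau\Gamma)=\frac{2p}{p+q}\int\phi_v u^{p-1}\Gamma$. It then moves the convolution onto $\phi_v u^{p-1}$ by Fubini and the evenness of $G_2^\tau$. So it needs neither a uniqueness statement nor Fourier analysis, only the symmetry of convolution with an even kernel. It does need the weak formulation for $H^1$ test functions and an integration by parts against $\Gamma$. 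It also leaves tacit the integrability of $\phi_v u^{p-1}$ that justifies Fubini. Your Step 1 supplies exactly that, with the same exponents $r=(\theta_1 p)'$ and $\phi_v\in L^{\theta_1'}$ that reappear in Lemma~\ref{lem:ma-zhao}.

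\textbf{Your route.} You use only the distributional equation and the existence part of Lemma~\ref{lem:G_2-embedding}. You then get uniqueness in $\mathcal{S}'$ from the nonvanishing symbol $4\pi^2|\xi|^2+\tau$. This is equally short and rigorous.

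\textbf{Pointwise identity.} Both arguments give equality only almost everywhere, and the paper stops there. Your Step 3 genuinely adds something: it upgrades the identity to every $x$, which is what the statement asserts and what the later pointwise moving-plane arguments use.

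Three small remarks on Step 3.
\begin{itemize}
\item The bootstrap you sketch does close under $p+q<2_\alpha^*$: the gain in reciprocal exponents per step is bounded below. Alternatively, you can replace it by the cited regularity theorem, as the paper does elsewhere.
\item Once $f\in L^\infty$, splitting the kernel is unnecessary: $G_2^\tau\in L^1$ already makes $G_2^\tau\ast f$ uniformly continuous. Even $f\in L^k$ for some $k>N/2$ suffices, since then $G_2^\tau\in L^{k'}$.
\item Your closing ``simpler route'' via Fatou and mollification is incomplete as written. Fatou only yields $G_2^\tau\ast f(x)\le u(x)$, and the reverse inequality needs a further argument. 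Keep the continuity route.
\end{itemize}
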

\begin{proof}
The identities follow from the same arguments, so we only prove the first one. Suppose that
$\xi \in C_c^\infty \parens{\real^N}$
and set $\Gamma = G_2^\tau \ast \xi$.
In view of the discussion in Section \ref{Bessel},
\[
\int_{\real^N} u \xi \dif \mu
=
\int_{\real^N}
	u
	\parens{- \Delta \Gamma + \tau \Gamma}
\dif \mu.
\]
The pair $\parens{u, v}$ solves \eqref{eqn:Hartree-system:2}, so
\[
\int_{\real^N} u \xi \dif \mu
=
\int_{\real^N}
	u
	\parens{- \Delta \Gamma + \tau\Gamma}
\dif \mu
=
\frac{2 p}{p + q}
\int_{\real^N}
	\phi_v u^{p - 1} \Gamma
\dif \mu.
\]
The function $G_2^\tau$ is even, so a change of variable shows that
\begin{align*}
\int_{\real^N} u \xi \dif \mu
&=
\frac{2 p}{p + q}
\int_{\real^N}
	\phi_v u^{p - 1} \Gamma
\dif \mu =
\frac{2 p}{p + q}
\int_{\real^N} \int_{\real^N}
	G_2^\tau \parens{y} \xi \parens{x - y}
	\phi_v \parens{x}
	u \parens{x}^{p - 1}
\dif x \dif y
\\
&=
\frac{2 p}{p + q}
\int_{\real^N}
	\parens*{
		G_2^\tau
		\ast
		\parens{\phi_v u^{p - 1}}
	}
	\xi
\dif \mu.
\end{align*}
Finally, the result follows from the fact that
$\xi$ denotes an arbitrary function in $C_c^\infty(\real^N)$.
\end{proof}

Let us establish a few identities which are similar to those in \cite[(3.4) and (3.5)]{vairaExistenceBoundStates2013} or \cite[Lemmas 4.1 and 4.2]{wangClassificationQualitativeAnalysis2024}.

\begin{lem}
\label{lem:u_lambda-u}
The following identities are satisfied for every $x \in \real^N$:
\begin{multline}
\label{lem:u_lambda-u:1}
u_\lambda \parens{x} - u \parens{x}
=
\frac{2 p}{p + q}
\int_{\Sigma_\lambda}
	\parens*{
		G_2^\tau \parens{x - y}
		-
		G_2^\tau \parens{x^\lambda - y}
	}
	\parens*{
		\phi_{v, \lambda} \parens{y} 
		u_\lambda \parens{y}^{p - 1}
		-
		\phi_v \parens{y}
		u \parens{y}^{p - 1}
	}
\dif y \, ,
\end{multline}
\begin{multline}
\label{lem:u_lambda-u:2}
v_\lambda \parens{x} - v \parens{x}
=
\frac{2 q}{p + q}
\int_{\Sigma_\lambda}
	\parens*{
		G_2^\eta \parens{x - y}
		-
		G_2^\eta \parens{x^\lambda - y}
	}
	\parens*{
		\psi_{u, \lambda} \parens{y} 
		v_\lambda \parens{y}^{q - 1}
		-
		\psi_u \parens{y}
		v \parens{y}^{q - 1}
	}
\dif y \, ,
\end{multline}
\begin{equation}
\label{lem:u_lambda-u:3}
\phi_{v, \lambda} \parens{x}
-
\phi_v \parens{x}
=
\int_{\Sigma_\lambda}
	\parens*{
		I_\alpha \parens{x - y}
		-
		I_\alpha \parens{x^\lambda - y}
	}
	\parens*{
		v_\lambda \parens{y}^q
		-
		v \parens{y}^q
	}
\dif y\, ,
\end{equation}
and
\begin{equation}
\label{lem:u_lambda-u:4}
\psi_{u, \lambda} \parens{x}
-
\psi_u \parens{x}
=
\int_{\Sigma_\lambda}
	\parens*{
		I_\alpha \parens{x - y}
		-
		I_\alpha \parens{x^\lambda - y}
	}
	\parens*{
		u_\lambda \parens{y}^p
		-
		u \parens{y}^p
	}
\dif y.
\end{equation}
\end{lem}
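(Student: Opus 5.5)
The plan is to start from the integral representations of Lemma \ref{lem:integral-system} for $u$ and $v$, and from the definitions $\phi_v = I_\alpha \ast v^q$ and $\psi_u = I_\alpha \ast u^p$, and to split every integral over $\real^N$ into $\Sigma_\lambda$ and its complement. On the complement we substitute $y = z^\lambda$ with $z \in \Sigma_\lambda$. This is an isometric change of variables, since reflection across $\partial \Sigma_\lambda$ is an involution of $\real^N$ with unit Jacobian. It maps $\real^N \setminus \Sigma_\lambda$ onto the interior of $\Sigma_\lambda$, and the hyperplane $\partial \Sigma_\lambda$ has measure zero.

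We carry this out for \eqref{lem:integral-system:1}. Write $h(y) := \phi_v(y) u(y)^{p-1}$, so that $u(x) = \frac{2p}{p+q}\int_{\real^N} G_2^\tau(x-y) h(y)\,\dif y$. After the splitting,
\[
u(x) = \frac{2p}{p+q} \int_{\Sigma_\lambda} \parens*{ G_2^\tau(x-y) h(y) + G_2^\tau(x - y^\lambda) h_\lambda(y) } \dif y .
\]
Since the reflection preserves distances, $|x - y^\lambda| = |x^\lambda - y|$. The kernel $G_2^\tau$ is radial, so $G_2^\tau(x-y^\lambda) = G_2^\tau(x^\lambda - y)$.

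Evaluating the same formula at $x^\lambda$ and using $(x^\lambda)^\lambda = x$ gives
\[
u_\lambda(x) = \frac{2p}{p+q} \int_{\Sigma_\lambda} \parens*{ G_2^\tau(x^\lambda-y) h(y) + G_2^\tau(x - y) h_\lambda(y) } \dif y .
\]
Subtracting the two expressions yields
\[
\frac{2p}{p+q}\int_{\Sigma_\lambda}\parens*{G_2^\tau(x-y)-G_2^\tau(x^\lambda-y)}\parens*{h_\lambda(y)-h(y)}\dif y .
\]
Finally, $h_\lambda = \phi_{v,\lambda} u_\lambda^{p-1}$, because reflecting a product reflects each factor. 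This is \eqref{lem:u_lambda-u:1}. Here $\phi_{v,\lambda}$ means $(\phi_v)_\lambda$.

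Identity \eqref{lem:u_lambda-u:2} follows in exactly the same way with $G_2^\eta$ and $\psi_u v^{q-1}$. Identities \eqref{lem:u_lambda-u:3} and \eqref{lem:u_lambda-u:4} follow by the same computation with kernel $I_\alpha$, which is also radial, and with densities $v^q$ and $u^p$. Here the integral formula is simply the definition of the convolution.

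The only real point to check is that every integral is absolutely convergent for each $x$, so that splitting the domain and subtracting is legitimate and no $\infty - \infty$ occurs.
\begin{itemize}
\item For $\phi_v(x)$: this is finite for every $x$ because $v$ is a bounded (indeed classical) $H^1$ function. One splits $I_\alpha$ into its part near $0$, which is integrable against $v^q \in L^\infty$, and its part away from $0$, which lies in $L^{s}$ for $s > N/(N-\alpha)$ and is paired with $v^q \in L^{s'}$. This is possible since $v \in L^r$ for all $r \in [2,\infty]$ and $q \geq 2$.
\item For the Bessel integrals: $h \in L^\infty$, $G_2^\tau \in L^1$ and $h \geq 0$. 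Lemma \ref{lem:integral-system} already asserts these integrals equal $u(x)$ pointwise, so they are finite.
\end{itemize}
With finiteness in hand, everything reduces to the linear change of variables above. I expect this integrability bookkeeping to be the only, and minor, obstacle.
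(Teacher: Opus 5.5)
Your proposal is correct and follows the paper's proof essentially step for step. You split the representation from Lemma \ref{lem:integral-system} over $\Sigma_\lambda$ and its complement, reflect via $y \mapsto y^\lambda$ using $|x - y^\lambda| = |x^\lambda - y|$ and the radiality of the kernel, evaluate at $x^\lambda$, and subtract. Your added finiteness check, which the paper omits, is a harmless supplement; note only that boundedness of $v$ should be credited to the regularity and integrability results of \cite{boerStandingWavesNonlinear2025}, since being a classical solution does not by itself give boundedness on $\mathbb{R}^N$.
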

\begin{proof}
Once again, we only prove that the first one holds. It follows from Lemma \ref{lem:integral-system} that
\begin{equation}
\label{lem:u_lambda-u:5}
\begin{aligned}
u \parens{x}
&=
\frac{2 p}{p + q}
\int_{\Sigma_\lambda}
	G_2^\tau \parens{x - y}
	\phi_v \parens{y}
	u \parens{y}^{p - 1}
\dif y +
\frac{2 p}{p + q}
\int_{\real^N \setminus \Sigma_\lambda}
	G_2^\tau \parens{x - y}
	\phi_v \parens{y}
	u \parens{y}^{p - 1}
\dif y.
\end{aligned}
\end{equation}
Due to the fact that
\begin{equation}
\label{eqn:reflection}
\real^N \ni y
\mapsto
y^\lambda \in \real^N
\end{equation}
is idempotent, it follows that
\[
\int_{\real^N \setminus \Sigma_\lambda}
	G_2^\tau \parens{x - y}
	\phi_v \parens{y}
	u \parens{y}^{p - 1}
\dif y
=
\int_{\real^N \setminus \Sigma_\lambda}
	G_2^\tau \parens*{\parens{x^\lambda}^\lambda - y}
	\phi_v \parens{y}
	u \parens{y}^{p - 1}
\dif y.
\]
It is easy to verify that
$|x - y^\lambda| = |x^\lambda - y|$
for every
$y \in \real^N$. As such, the change of variable $z = y^\lambda$ shows that
\[
\int_{\real^N \setminus \Sigma_\lambda}
	G_2^\tau \parens{x - y}
	\phi_v \parens{y}
	u \parens{y}^{p - 1}
\dif y
=
\int_{\Sigma_\lambda}
	G_2^\tau \parens{x^\lambda - z}
	\phi_{v, \lambda} \parens{z}
	u_\lambda \parens{z}^{p - 1}
\dif z
\]
because $G_2^\tau$ is radial. Then, in view of \eqref{lem:u_lambda-u:5}, 
\begin{equation}
\label{lem:u_lambda-u:6}
u \parens{x}
=
\frac{2 p}{p + q}
\int_{\Sigma_\lambda}
	G_2^\tau \parens{x - y}
	\phi_v \parens{y}
	u \parens{y}^{p - 1}
	+
	G_2^\tau \parens{x^\lambda - y}
	\phi_{v, \lambda} \parens{y}
	u_\lambda \parens{y}^{p - 1}
\dif y.
\end{equation}
Using again the fact that \eqref{eqn:reflection}
is idempotent, if follows that
\begin{equation}
\label{lem:u_lambda-u:7}
\begin{aligned}
u_\lambda \parens{x}
=
u \parens{x^\lambda}
&=
\frac{2 p}{p + q}
\int_{\Sigma_\lambda}
	G_2^\tau \parens{x^\lambda - y}
	\phi_v \parens{y}
	u \parens{y}^{p - 1}
\dif y +
\frac{2 p}{p + q}
\int_{\Sigma_\lambda}
	G_2^\tau \parens{x - y}
	\phi_{v, \lambda} \parens{y}
	u_\lambda \parens{y}^{p - 1}
\dif y.
\end{aligned}
\end{equation}
Finally, \eqref{lem:u_lambda-u:1} follows from \eqref{lem:u_lambda-u:6} and \eqref{lem:u_lambda-u:7}.
\end{proof}

Now, we obtain inequalities which are similar to those developed at Step 1 in \cite[p. 502--503]{vairaExistenceBoundStates2013}.

\begin{lem}
\label{lem:preliminary-inequalities}
Suppose that $\lambda \leq 0$. The following inequalities are satisfied:
\begin{equation}
\label{lem:preliminary-inequalities:1}
0
<
G_2^\tau \parens{x - y}
-
G_2^\tau \parens{x^\lambda - y}
<
G_2^\tau \parens{x - y}
\end{equation}
and
\begin{equation}
\label{lem:preliminary-inequalities:2}
0
<
I_\alpha \parens{x - y}
-
I_\alpha \parens{x^\lambda - y}
<
I_\alpha \parens{x - y}
\end{equation}
for every $\tau > 0$, $x \in \interior \Sigma_\lambda$
and
$y \in \interior \Sigma_\lambda \setminus \set{x}$.
The inequalities that follow are also satisfied:
\begin{equation}
\label{lem:preliminary-inequalities:3}
\begin{aligned}
0
\leq
u_\lambda \parens{x} - u \parens{x}
&\leq
\frac{2 p \parens{p - 1}}{p + q}
\int_{\Sigma_\lambda^u}
	G_2^\tau \parens{x - y}
	\phi_{v, \lambda} \parens{y}
	u_\lambda \parens{y}^{p - 2}
	\parens*{
		u_\lambda \parens{y}
		-
		u \parens{y}
	}
\dif y
\\
&+
\frac{4 p}{p + q}
\int_{\Sigma_\lambda^{\phi_v}}
	G_2^\tau \parens{x - y}
	u \parens{y}^{p - 1}
	\parens*{
		\phi_{v, \lambda} \parens{y}
		-
		\phi_v \parens{y}
	}
\dif y
\end{aligned}
\end{equation}
for every $x \in \Sigma_\lambda^u$;
\begin{equation}
\label{lem:preliminary-inequalities:5}
\begin{aligned}
0
\leq
v_\lambda \parens{x} - v \parens{x}
&\leq
\frac{2 q \parens{q - 1}}{p + q}
\int_{\Sigma_\lambda^v}
	G_2^\eta \parens{x - y}
	\psi_{u, \lambda} \parens{y}
	v_\lambda \parens{y}^{q - 2}
	\parens*{
		v_\lambda \parens{y}
		-
		v \parens{y}
	}
\dif y
\\
&+
\frac{4 q}{p + q}
\int_{\Sigma_\lambda^{\phi_v}}
	G_2^\eta \parens{x - y}
	v \parens{y}^{q - 1}
	\parens*{
		\psi_{u, \lambda} \parens{y}
		-
		\psi_u \parens{y}
	}
\dif y
\end{aligned}
\end{equation}
for every
$x \in \Sigma_\lambda^v$;
\begin{equation}
\label{lem:preliminary-inequalities:7}
0
\leq
\phi_{v, \lambda} \parens{x} - \phi_v \parens{x}
\leq
q
\int_{\Sigma_\lambda^v}
	I_\alpha \parens{x - y}
	v_\lambda \parens{y}^{q - 1}
	\parens*{v_\lambda \parens{y} - v \parens{y}}
\dif y
\end{equation}
for every $x \in \Sigma_\lambda^{\phi_v}$ and
\begin{equation}
\label{lem:preliminary-inequalities:8}
0
\leq
\psi_{u, \lambda} \parens{x} - \psi_u \parens{x}
\leq
p
\int_{\Sigma_\lambda^u}
	I_\alpha \parens{x - y}
	u_\lambda \parens{y}^{p - 1}
	\parens*{u_\lambda \parens{y} - u \parens{y}}
\dif y
\end{equation}
for every
$x \in \Sigma_\lambda^{\psi_u}$.
\end{lem}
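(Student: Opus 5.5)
First I will prove the kernel inequalities \eqref{lem:preliminary-inequalities:1}--\eqref{lem:preliminary-inequalities:2}. For $x, y \in \interior \Sigma_\lambda$ we have $x_1, y_1 > \lambda$, so
\[
\abs{x^\lambda - y}^2 - \abs{x - y}^2 = (2\lambda - x_1 - y_1)^2 - (x_1 - y_1)^2 = 4 (\lambda - x_1)(\lambda - y_1) > 0 .
\]
Hence $\abs{x - y} < \abs{x^\lambda - y}$. The Riesz kernel $I_\alpha$ is a strictly decreasing positive function of $\abs{\cdot}$. The same holds for $G_2$, because the integrand in its definition is strictly decreasing in $\abs{x}$ and positive. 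This gives both chains of strict inequalities, with the upper bounds coming from positivity of $G_2^\tau(x^\lambda - y)$ and $I_\alpha(x^\lambda - y)$. The restriction $\lambda \le 0$ plays no role here.

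Next I will treat \eqref{lem:preliminary-inequalities:7} and \eqref{lem:preliminary-inequalities:8}, which are the simplest of the remaining inequalities. Take $x \in \Sigma^{\phi_v}_\lambda$. The lower bound $0 \le \phi_{v,\lambda}(x) - \phi_v(x)$ holds by the definition of that set. For the upper bound I start from \eqref{lem:u_lambda-u:3} and split the integral over $\Sigma_\lambda$ into $\Sigma_\lambda^v$ and its complement.
\begin{itemize}
\item On $\Sigma_\lambda \setminus \Sigma^v_\lambda$ we have $v_\lambda^q - v^q \le 0$, while the kernel difference is nonnegative by \eqref{lem:preliminary-inequalities:2}. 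I only need this for $x \in \interior\Sigma_\lambda$; the boundary $x_1 = \lambda$ gives zero kernel difference. So this part can be dropped.
\item On $\Sigma^v_\lambda$ I bound the kernel difference by $I_\alpha(x - y)$. The mean value theorem with $q \ge 1$ gives $v_\lambda^q - v^q \le q\, v_\lambda^{q-1}(v_\lambda - v)$ when $v_\lambda > v$.
\end{itemize}
The same argument applied to \eqref{lem:u_lambda-u:4} gives \eqref{lem:preliminary-inequalities:8}.

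For \eqref{lem:preliminary-inequalities:3} I start from \eqref{lem:u_lambda-u:1} at $x \in \Sigma_\lambda^u$, where the left side is nonnegative by definition. I write the nonlinear difference as
\[
\phi_{v,\lambda} u_\lambda^{p-1} - \phi_v u^{p-1} = \phi_{v,\lambda}\parens{u_\lambda^{p-1} - u^{p-1}} + u^{p-1}\parens{\phi_{v,\lambda} - \phi_v} .
\]
This is where the main care is needed: the integrand does not have a sign on all of $\Sigma_\lambda$. I handle it in two steps.
\begin{itemize}
\item The first summand is $\le 0$ off $\Sigma^u_\lambda$, since $p \ge 2$ makes $s \mapsto s^{p-1}$ increasing. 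The second summand is $\le 0$ off $\Sigma_\lambda^{\phi_v}$. So the positive part of the sum is at most $\chi_{\Sigma_\lambda^u}\,\phi_{v,\lambda}(u_\lambda^{p-1} - u^{p-1}) + \chi_{\Sigma_\lambda^{\phi_v}}\, u^{p-1}(\phi_{v,\lambda} - \phi_v)$.
\item I multiply by the nonnegative kernel difference, bound it above by $G_2^\tau(x - y)$ using \eqref{lem:preliminary-inequalities:1}, and apply the mean value theorem $u_\lambda^{p-1} - u^{p-1} \le (p-1)\, u_\lambda^{p-2}(u_\lambda - u)$ on $\Sigma^u_\lambda$. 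This uses $p \ge 2$, so that $s^{p-2}$ is nondecreasing.
\end{itemize}
This yields the first term with constant $2p(p-1)/(p+q)$ and the second term with constant $2p/(p+q)$. The latter is dominated by the stated $4p/(p+q)$, since that term is nonnegative. The bound \eqref{lem:preliminary-inequalities:5} follows identically from \eqref{lem:u_lambda-u:2}, with the sets $\Sigma_\lambda^v$ and $\Sigma_\lambda^{\psi_u}$.

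Points $x$ on $\partial\Sigma_\lambda$ satisfy $u_\lambda(x) = u(x)$, so they do not lie in $\Sigma^u_\lambda$ and need no separate treatment. For $y$ on the boundary or with $y = x$, the set is null. I expect no step to be a serious obstacle; the only delicate point is the sign bookkeeping in the splitting above.
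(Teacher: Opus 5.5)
Your proof is correct. For \eqref{lem:preliminary-inequalities:1}, \eqref{lem:preliminary-inequalities:2}, \eqref{lem:preliminary-inequalities:7} and \eqref{lem:preliminary-inequalities:8} you argue as the paper does. Your identity $\abs{x^\lambda-y}^2-\abs{x-y}^2=4(\lambda-x_1)(\lambda-y_1)$ just makes explicit its remark on when equality occurs.

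For \eqref{lem:preliminary-inequalities:3} and \eqref{lem:preliminary-inequalities:5} your route is different. The paper first splits $\Sigma_\lambda$ into $\Sigma_\lambda^u$ and its complement. On the complement it uses $u_\lambda\le u$ to reduce the integrand to $u^{p-1}(\phi_{v,\lambda}-\phi_v)$. On $\Sigma_\lambda^u$ it uses the other splitting $\phi_v(u_\lambda^{p-1}-u^{p-1})+u_\lambda^{p-1}(\phi_{v,\lambda}-\phi_v)$. It then merges the two $\phi$-difference integrals into one with constant $4p/(p+q)$. As written, that chain ends with $\phi_v$ instead of $\phi_{v,\lambda}$ in the first integral and $u_\lambda^{p-1}$ instead of $u^{p-1}$ in the second. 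Moreover, the merge replaces $u^{p-1}$ by $u_\lambda^{p-1}$ on $(\Sigma_\lambda\setminus\Sigma_\lambda^u)\cap\Sigma_\lambda^{\phi_v}$, where $u\ge u_\lambda$, i.e.\ in the wrong direction; a weight $\max\{u,u_\lambda\}^{p-1}$ would be needed.

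Your single decomposition $\phi_{v,\lambda}(u_\lambda^{p-1}-u^{p-1})+u^{p-1}(\phi_{v,\lambda}-\phi_v)$, together with $A+B\le\chi_{\Sigma_\lambda^u}A+\chi_{\Sigma_\lambda^{\phi_v}}B$, needs no case split. It yields exactly the weights in the statement, with the sharper constant $2p/(p+q)$, so the $4p/(p+q)$ is pure slack. It is also the form the later argument relies on. Step 1 of the proof of Theorem \ref{thm:most-general} needs $\norm{\phi_{v,\lambda}}_{L^{\beta_1}(\Sigma_\lambda^u)}$ to be small. Lemma \ref{lem:limit-of-integrals} gives this for the reflected function but not for $\phi_v$ itself, and when $p=2$ there is no small factor $\norm{u_\lambda}^{p-2}$ to compensate.

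Finally, you were right to take the second integral in \eqref{lem:preliminary-inequalities:5} over $\Sigma_\lambda^{\psi_u}$. The printed $\Sigma_\lambda^{\phi_v}$ is a typo: the integrand $v^{q-1}(\psi_{u,\lambda}-\psi_u)$ has no sign there, and Lemma \ref{lem:estimates} uses $\Sigma_\lambda^{\psi_u}$.
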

\begin{proof} Proof of \eqref{lem:preliminary-inequalities:1} and \eqref{lem:preliminary-inequalities:2}.
The positive functions $G_2^\tau$ and $I_\alpha$ are radially decreasing and it is easy to verify that
$\abs{x - y} \leq \abs{x^\lambda - y}$
for every
$x, y \in \Sigma_\lambda$, 
so
\[
0
\leq
G_2^\tau \parens{x - y}
-
G_2^\tau \parens{x^\lambda - y}
<
G_2^\tau \parens{x - y}
\]
and
\[
0
\leq
I_\alpha \parens{x - y}
-
I_\alpha \parens{x^\lambda - y}
<
I_\alpha \parens{x - y}.
\]
On one hand, the equalities
\[
0
=
G_2 \parens{x - y}
-
G_2 \parens{x^\lambda - y}
=
I_\alpha \parens{x - y}
-
I_\alpha \parens{x^\lambda - y}
\]
only hold when
$\abs{x - y} = \abs{x^\lambda - y} > 0$.
On the other hand, the equality
$\abs{x - y} = \abs{x^\lambda - y}$
is satisfied precisely when $x_1 = \lambda$ or
$y_1 = \lambda$, hence the result.

\paragraph{Proof of \eqref{lem:preliminary-inequalities:3} and \eqref{lem:preliminary-inequalities:5}.}
We only prove that \eqref{lem:preliminary-inequalities:3} holds. In view of \eqref{lem:u_lambda-u:1},
\[
\begin{aligned}
u_\lambda \parens{x} - u \parens{x}
&\leq
\frac{2 p}{p + q}
\int_{\Sigma_\lambda^u}
	\parens*{
		G_2^\tau \parens{x - y}
		-
		G_2^\tau \parens{x^\lambda - y}
	}
	\parens*{
		\phi_{v, \lambda} \parens{y}
		u_\lambda \parens{y}^{p - 1}
		-
		\phi_v \parens{y}
		u \parens{y}^{p - 1}
	}
\dif y
\\
&+
\frac{2 p}{p + q}
\int_{\Sigma_\lambda \setminus \Sigma_\lambda^u}
	\parens*{
		G_2^\tau \parens{x - y}
		-
		G_2^\tau \parens{x^\lambda - y}
	}
	u \parens{y}^{p - 1}
	\parens*{
		\phi_{v, \lambda} \parens{y}
		-
		\phi_v \parens{y}
	}
\dif y;
\\
&\leq
\frac{2 p}{p + q}
\int_{\Sigma_\lambda^u}
	\parens*{
		G_2^\tau \parens{x - y}
		-
		G_2^\tau \parens{x^\lambda - y}
	}
	\parens*{
		\phi_{v, \lambda} \parens{y} u_\lambda \parens{y}^{p - 1}
		-
		\phi_v \parens{y} u \parens{y}^{p - 1}
	}
\dif y
\\
&+
\frac{2 p}{p + q}
\int_{(\Sigma_\lambda \setminus \Sigma_\lambda^u) \cap \Sigma_\lambda^{\phi_v}}
	\parens*{
		G_2^\tau \parens{x - y}
		-
		G_2^\tau \parens{x^\lambda - y}
	}
	u \parens{y}^{p - 1}
	\parens*{
		\phi_{v, \lambda} \parens{y}
		-
		\phi_v \parens{y}
	}
\dif y;
\\
&\leq
\frac{2 p}{p + q}
\int_{\Sigma_\lambda^u}
	\parens*{
		G_2^\tau \parens{x - y}
		-
		G_2^\tau \parens{x^\lambda - y}
	}
	\parens*{
		\phi_{v, \lambda} \parens{y}
		u_\lambda \parens{y}^{p - 1}
		-
		\phi_v \parens{y}
		u \parens{y}^{p - 1}
	}
\dif y
\\
&+
\frac{2 p}{p + q}
\int_{\Sigma_\lambda^{\phi_v}}
	\parens*{
		G_2^\tau \parens{x - y}
		-
		G_2^\tau \parens{x^\lambda - y}
	}
	u \parens{y}^{p - 1}
	\parens*{
		\phi_{v, \lambda} \parens{y}
		-
		\phi_v \parens{y}
	}
\dif y;
\\
&\leq
\frac{2 p}{p + q}
\int_{\Sigma_\lambda^u}
	\parens*{
		G_2^\tau \parens{x - y}
		-
		G_2^\tau \parens{x^\lambda - y}
	}
	\phi_v \parens{y}
	\parens*{
		u_\lambda \parens{y}^{p - 1}
		-
		u \parens{y}^{p - 1}
	}
\dif y
\\
&+
\frac{4 p}{p + q}
\int_{\Sigma_\lambda^{\phi_v}}
	\parens*{
		G_2^\tau \parens{x - y}
		-
		G_2^\tau \parens{x^\lambda - y}
	}
	u_\lambda \parens{y}^{p - 1}
	\parens*{
		\phi_{v, \lambda} \parens{y}
		-
		\phi_v \parens{y}
	}
\dif y.
\end{aligned}
\]
Due to \eqref{lem:preliminary-inequalities:1},
\[
\begin{aligned}
u_\lambda \parens{x} - u \parens{x}
&\leq
\frac{2 p}{p + q}
\int_{\Sigma_\lambda^u}
	G_2^\tau \parens{x - y}
	\phi_v \parens{y}
	\parens*{
		u_\lambda \parens{y}^{p - 1}
		-
		u \parens{y}^{p - 1}
	}
\dif y
\\
&+
\frac{4 p}{p + q}
\int_{\Sigma_\lambda^{\phi_v}}
	G_2^\tau \parens{x - y}
	u_\lambda \parens{y}^{p - 1}
	\parens*{
		\phi_{v, \lambda} \parens{y}
		-
		\phi_v \parens{y}
	}
\dif y.
\end{aligned}
\]
To estimate the first term on the right-hand side, by the mean value inequality and the definition of $\Sigma_\lambda^u$,
\[
0
\leq
u_\lambda^{p - 1} - u^{p - 1}
\leq
\parens{p - 1} u_\lambda^{p - 2}
\parens{u_\lambda - u}
\quad \text{in} \quad
\Sigma_\lambda^u.
\]
The result follows from this estimate.

\paragraph{Proof of \eqref{lem:preliminary-inequalities:7}, \eqref{lem:preliminary-inequalities:8}.}
We only prove that \eqref{lem:preliminary-inequalities:7} is satisfied. As in the previous proof,
\[
0
\leq
v_\lambda^q - v^q
\leq
q v_\lambda^{q - 1} \parens{v_\lambda - v}
\quad \text{in} \quad
\Sigma_\lambda^v.
\]
In view of this inequality, the result is a corollary of \eqref{lem:u_lambda-u:3} and \eqref{lem:preliminary-inequalities:2}.
\end{proof}

\subsection{Some integrability conditions}
\label{sect:ma-zhao}

Instead of directly proving Theorem \ref{thm:symmetry}, we will consider a slightly different result with a condition inspired by the hypotheses in \cite[Theorem 2]{maClassificationPositiveSolitary2010}; see Theorem \ref{thm:most-general} ahead. Afterwards, we prove in Lemma \ref{lem:ma-zhao} that this condition is automatically satisfied.

\begin{cond}
\label{cond:ma-zhao}
There exist
\[
\theta_1, \theta_2 \in \ooi*{1, \frac{N}{\alpha}}\, ,
\quad
r, h \in \ooi{1, \infty}\, ,
\quad
\beta_0, \beta_1, \gamma_0, \gamma_1
\in
\ooi*{\frac{N}{N - \alpha}, \infty}
\]
and
\[s_0, s_1, s_2, s_3, t_0, t_1, t_2, t_3 \in \coi{2, \infty}\]
such that
\begin{center}
\begin{tabular}{l l}
$
\begin{cases}
u
\in
W^{2, r} \parens{\real^N} \hookrightarrow L^{s_0} \parens{\real^N},
\\
u
\in
L^{s_1} \parens{\real^N}
\cap
L^{s_2} \parens{\real^N}
\cap
L^{s_3} \parens{\real^N},
\\
v
\in
W^{2, h} \parens{\real^N} \hookrightarrow L^{t_0} \parens{\real^N},
\\
v
\in
L^{t_1} \parens{\real^N}
\cap
L^{t_2} \parens{\real^N}
\cap
L^{t_3} \parens{\real^N},
\end{cases}
$
&
$
\begin{cases}
I_\alpha \ast u^p \in L^{\gamma_0} \parens{\real^N} \cap L^{\gamma_1} \parens{\real^N},
\\
I_\alpha \ast v^q \in L^{\beta_0} \parens{\real^N} \cap L^{\beta_1} \parens{\real^N},
\end{cases}
$
\\
\\
$
\begin{cases}
s_1 \geq p - 2, ~
s_2 \geq p - 1, ~
s_3 \geq p - 1,
\\
\frac{1}{\beta_1}
+
\frac{p - 2}{s_1}
+
\frac{1}{s_0}
=
\frac{1}{r},
\\
\frac{p - 1}{s_2}
+
\frac{1}{\beta_0}
=
\frac{1}{r},
\\
\frac{1}{\beta_0} + \frac{\alpha}{N}
=
\frac{1}{\theta_2},
\\
\frac{q - 1}{t_3} + \frac{1}{t_0}
=
\frac{1}{\theta_2}\, ,
\\
\end{cases}
$
and
&
$
\begin{cases}
t_1 \geq q - 2, ~
t_2 \geq q - 1, ~
t_3 \geq q - 1,
\\
\frac{1}{\gamma_1}
+
\frac{q - 2}{t_1}
+
\frac{1}{t_0}
=
\frac{1}{h},
\\
\frac{q - 1}{t_2}
+
\frac{1}{\gamma_0}
=
\frac{1}{h},
\\
\frac{1}{\gamma_0} + \frac{\alpha}{N}
=
\frac{1}{\theta_1},
\\
\frac{p - 1}{s_3} + \frac{1}{s_0}
=
\frac{1}{\theta_1}.
\end{cases}
$
\end{tabular}
\end{center}
\end{cond}

The motivation for Condition \ref{cond:ma-zhao} will become clearer in Lemma \ref{lem:estimates}, where we show that it collects sufficient integrability hypotheses under which we can appropriately use the Hardy--Littlewood--Sobolev and Hölder inequalities to obtain adequate estimates for the Chen--Li--Ou method.

\begin{thm}
\label{thm:most-general}
Suppose that $\tau, \eta > 0$, $N \in \nat$, $0 < \alpha < N$, \eqref{H2} holds, $\parens{u, v} \in E \parens{\real^N}$ is a positive solution to \eqref{eqn:Hartree-system:2} and Condition \ref{cond:ma-zhao} is satisfied. Then there exists
$x_0 \in  \real^N$ such that $u (\cdot - x_0)$
and $v (\cdot - x_0)$ are radial and strictly radially decreasing.
\end{thm}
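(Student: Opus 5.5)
The plan is the Chen--Li--Ou integral moving planes method, applied to the four quantities $u, v, \phi_v, \psi_u$ simultaneously and based on the inequalities of Lemma \ref{lem:preliminary-inequalities}.

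\textbf{Step 1: the key estimate.} For every $\lambda \le 0$ we derive a closed system of norm inequalities on the sets $\Sigma^u_\lambda, \Sigma^v_\lambda, \Sigma^{\phi_v}_\lambda, \Sigma^{\psi_u}_\lambda$. Start from \eqref{lem:preliminary-inequalities:3} and use Lemma \ref{lem:G_2-embedding} with Sobolev embedding $W^{2,r}\hookrightarrow L^{s_0}$ (applied to $G_2^\tau\ast(f\chi)$). Then apply Hölder with the exponent relations of Condition \ref{cond:ma-zhao}. This gives
\[
\norm{u_\lambda-u}_{L^{s_0}(\Sigma_\lambda^u)}
\le C\norm{\phi_{v,\lambda}}_{L^{\beta_1}(\Sigma_\lambda^u)}\norm{u_\lambda}^{p-2}_{L^{s_1}(\Sigma_\lambda^u)}\norm{u_\lambda-u}_{L^{s_0}(\Sigma_\lambda^u)}
+C\norm{u}^{p-1}_{L^{s_2}(\Sigma_\lambda^{\phi_v})}\norm{\phi_{v,\lambda}-\phi_v}_{L^{\beta_0}(\Sigma_\lambda^{\phi_v})}.
\]
This is where $p\ge2$ is used, so that $u_\lambda^{p-2}$ is a positive power. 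In the second term one has $u^{p-1}\le u_\lambda^{p-1}$ only on part of the set; we replace it by $u_\lambda$ or $u$ as convenient, both of which lie in $L^{s_2}$. From \eqref{lem:preliminary-inequalities:7} and Proposition \ref{prop:HLS}(2) with exponent $\theta_2$, together with $1/\beta_0+\alpha/N=1/\theta_2$, we get
\[
\norm{\phi_{v,\lambda}-\phi_v}_{L^{\beta_0}(\Sigma_\lambda^{\phi_v})}\le C\norm{v_\lambda}^{q-1}_{L^{t_3}(\Sigma_\lambda^v)}\norm{v_\lambda-v}_{L^{t_0}(\Sigma_\lambda^v)}.
\]
The symmetric inequalities hold for $v$ and $\psi_u$. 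Combining them, $A:=\norm{u_\lambda-u}_{L^{s_0}(\Sigma^u_\lambda)}$ and $B:=\norm{v_\lambda-v}_{L^{t_0}(\Sigma^v_\lambda)}$ satisfy $A\le \epsilon_1(\lambda)A+\epsilon_2(\lambda)B$ and $B\le\epsilon_3(\lambda)B+\epsilon_4(\lambda)A$. The coefficients are products of norms of $u_\lambda, v_\lambda, \phi_{v,\lambda}, \psi_{u,\lambda}$ (and $u,v$) over subsets of $\Sigma_\lambda$.

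\textbf{Step 2: start the plane.} By Lemma \ref{lem:limit-of-integrals}, all $\epsilon_i(\lambda)\to0$ as $\lambda\to-\infty$. Hence $A=B=0$ for $\lambda\ll0$, so $\Sigma^u_\lambda$ and $\Sigma^v_\lambda$ are null, and then $\Sigma^{\phi_v}_\lambda,\Sigma^{\psi_u}_\lambda$ are null by \eqref{lem:preliminary-inequalities:7}--\eqref{lem:preliminary-inequalities:8}. Since everything is continuous, $u_\lambda\le u$ and $v_\lambda\le v$ on $\Sigma_\lambda$.

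\textbf{Step 3: move the plane.} Let $\lambda_0=\sup\{\lambda\le 0: u_\mu\le u,\ v_\mu\le v \text{ on }\Sigma_\mu\ \forall\mu\le\lambda\}$. If $\lambda_0<0$, we show that $u_{\lambda_0}\equiv u$ and $v_{\lambda_0}\equiv v$; otherwise the plane can be pushed further.

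Suppose symmetry fails at $\lambda_0$. Using \eqref{lem:u_lambda-u:1} with the strict positivity in \eqref{lem:preliminary-inequalities:1}, the strong comparison gives $u_{\lambda_0}<u$ and $v_{\lambda_0}<v$ in $\interior\Sigma_{\lambda_0}$. Indeed, if $\phi_{v,\lambda_0}u_{\lambda_0}^{p-1}-\phi_vu^{p-1}\le0$ and does not vanish identically, the integral is strictly negative; the same reasoning applies to $\phi_v$ via \eqref{lem:u_lambda-u:3}.

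Then for $\lambda\in[\lambda_0,\lambda_0+\delta)$, the sets $\Sigma^u_\lambda$ and $\Sigma^v_\lambda$ are contained in the union of a large-ball complement, where the tails are small by integrability, and a set of small measure near $\partial\Sigma_{\lambda_0}$, by continuity and the strict inequality on compacts. So the $\epsilon_i$ become small, Step 1 forces $A=B=0$, and this contradicts the definition of $\lambda_0$.

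\textbf{Main obstacle.} The main difficulty is this continuity/smallness argument with four coupled sets, making sure every coefficient involves only norms over the small sets $\Sigma^u_\lambda,\Sigma^v_\lambda$. It is handled by keeping in each product at least one factor restricted to these sets, using the exponents chosen in Condition \ref{cond:ma-zhao}.

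\textbf{Step 4: conclusion.} If $\lambda_0=0$ in every direction, the procedure from the other side gives symmetry about $0$. In general, performing the argument in every direction shows that $u$ and $v$ are symmetric about a hyperplane orthogonal to each direction, for both functions and with the same hyperplane. This common hyperplane comes from the coupled conclusion $u_{\lambda_0}\equiv u\iff v_{\lambda_0}\equiv v$, which follows via \eqref{lem:u_lambda-u:1}--\eqref{lem:u_lambda-u:4}. Hence there is a common center $x_0$, and $u(\cdot-x_0)$ and $v(\cdot-x_0)$ are radial. Strict monotonicity follows from the strict inequalities $u_\lambda<u$ for $\lambda<\lambda_0$ obtained above.
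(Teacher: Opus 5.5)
Your proposal is correct and follows the paper's proof: the Chen--Li--Ou integral moving-plane scheme built on the estimates of Lemma~\ref{lem:estimates}, started with Lemma~\ref{lem:limit-of-integrals}, and continued past the critical position using the strong comparison from \eqref{lem:u_lambda-u:1}, \eqref{lem:u_lambda-u:3} and the strict kernel inequalities. The differences are only in implementation. You get smallness of the coefficients just past $\lambda_0$ by the classical compact-set/uniform-continuity argument (far-field tail plus a thin strip near $\partial\Sigma_{\lambda_0}$), whereas the paper uses the pointwise convergence $\chi_{\Sigma^u_{\lambda'+\eps}}\to 0$ with dominated convergence. You handle the restriction $\lambda\le 0$ by running the procedure from both sides, where the paper translates $\lambda_0$ to $0$. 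You obtain strict monotonicity from $u_\lambda<u$ in $\interior\Sigma_\lambda$ for $\lambda<\lambda_0$, rather than from the Hopf lemma; this needs non-symmetry at each such $\lambda$, which holds because two parallel symmetry hyperplanes would make $u\in L^{s_0}$ periodic.
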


In order to deduce that Theorem \ref{thm:most-general} implies Theorem \ref{thm:symmetry}, we only have to show that Condition \ref{cond:ma-zhao} is automatically satisfied for any  solution $\parens{u, v} \in E \parens{\real^N}$ of \eqref{eqn:Hartree-system:2}. This is done in the following lemma by arguing as in \cite[Proof of Theorem 1.4]{boerStandingWavesNonlinear2025}.

\begin{lem}
\label{lem:ma-zhao}
Suppose that $\tau, \eta > 0$, $N \in \nat$, $0 < \alpha < N$, \eqref{H2} holds and $\parens{u, v} \in E \parens{\real^N}$ is a solution to \eqref{eqn:Hartree-system:2}. Then Condition \ref{cond:ma-zhao} is satisfied.
\end{lem}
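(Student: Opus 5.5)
The strategy is to establish the integrability $u, v \in L^s(\real^N)$ for every $s \in [2, \infty]$, and $u, v \in W^{2,r}(\real^N)$ for every $r \in (1, \infty)$ close enough to the relevant range. Once this is available, Condition \ref{cond:ma-zhao} becomes a matter of choosing exponents satisfying finitely many linear relations with some freedom. For the regularity we would follow \cite[Proof of Theorem 1.4]{boerStandingWavesNonlinear2025}: starting from $u, v \in H^1 \hookrightarrow L^s$ for $s \in [2, 2^*)$, we would run a bootstrap based on Lemma \ref{lem:integral-system} and Lemma \ref{lem:G_2-embedding}. Proposition \ref{prop:HLS}(2) gives $\phi_v \in L^{\beta}$ whenever $v^q \in L^{\theta}$ with $\theta \in (1, N/\alpha)$ and $1/\beta = 1/\theta - \alpha/N$. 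Hölder's inequality then places $\phi_v u^{p-1}$ in $L^r$, hence $u \in W^{2,r}$, which embeds into a better Lebesgue space. Iterating finitely many times (the gain per step is uniform, as in the cited proof) yields $u, v \in L^2 \cap L^\infty$. It also yields $\phi_v \in L^{\beta}$ and $\psi_u \in L^{\gamma}$ for all $\beta, \gamma \in (N/(N-\alpha), \infty)$, because $v^q \in L^{\theta}$ for every $\theta \in [2/q, \infty] \supset (1, N/\alpha)$ when $q \ge 2$. Consequently $u, v \in W^{2,r}$ for all $r \in (1, \infty)$ such that $\phi_v u^{p-1} \in L^r$, which includes all $r \ge 2$ and, more generally, all $r$ with $1/r < 1 - \alpha/N + \dots$. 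We would simply quote this as in the paper.

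Next we fix the exponents. We take $\theta_1, \theta_2$ from \eqref{eqn:thetas}. Then $\beta_0$ and $\gamma_0$ are determined by $1/\beta_0 = 1/\theta_2 - \alpha/N$ and $1/\gamma_0 = 1/\theta_1 - \alpha/N$; both lie in $(N/(N-\alpha), \infty)$ because $\theta_i \in (1, N/\alpha)$. We choose $s_0, t_0$ large, in particular larger than the $L^\infty$-compatible Sobolev targets, so that $1/\theta_1 - 1/s_0 > 0$. We then define $s_3$ by $(p-1)/s_3 = 1/\theta_1 - 1/s_0$, and need $s_3 \ge \max\{2, p-1\}$. Since $\theta_1 p > 2$ gives $1/\theta_1 < p/2$, taking $s_0$ large makes $(p-1)/s_3$ close to $1/\theta_1$, so $s_3$ is close to $(p-1)\theta_1$. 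This is at least $p-1$, and we need $\theta_1(p-1) \ge 2$. If that fails we would instead choose $s_0$ smaller, still at least $2$, using the freedom in $1/s_0 \in (0, 1/2]$. The admissible set for $1/s_0$ is then $1/\theta_1 - (p-1)/2 \le 1/s_0 \le 1/2$, together with $1/s_0 < 1/\theta_1$. This set is nonempty precisely because $1/\theta_1 < p/2$. The same construction gives $t_0, t_3$.

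It remains to choose $r$ and $h$ so that $s_0$ is a Sobolev target of $W^{2,r}$, that is, $1/r - 2/N \le 1/s_0 \le 1/r$ (with any $r$ allowed for large $s_0$ when $N \le 2r$), and so that the two equations defining $s_1, s_2$ are solvable. From $1/r = (p-1)/s_2 + 1/\beta_0$ we need $1/r \in (1/\beta_0, 1/\beta_0 + (p-1)/2]$, which also forces $s_2 \ge p-1$ for $p \ge 2$. From $1/r = 1/\beta_1 + (p-2)/s_1 + 1/s_0$, with $\beta_1$ free in $(N/(N-\alpha), \infty)$ and $s_1$ free in $[\max\{2, p-2\}, \infty]$, we need $1/r > 1/s_0$ with $1/r - 1/s_0 < 1 - \alpha/N + (p-2)/2$. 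The case $p = 2$ is harmless, since the $s_1$ term then disappears. Taking $1/r$ slightly larger than $\max\{1/s_0, 1/\beta_0\}$ and below $\min\{1, 1/s_0 + 2/N\}$ meets all of these requirements, provided the intervals overlap. The main obstacle is checking this compatibility of the simultaneous interval constraints on $1/s_0$ and $1/r$, especially in low dimensions and near the endpoints $p+q \approx 2^*_\alpha$. We would handle it by exploiting the openness of all constraints, perturbing $\theta_1$ inside the open set allowed by \eqref{eqn:thetas} if needed, and noting that the $W^{2,r}$ membership holds for every $r$ in the resulting range by the bootstrap.
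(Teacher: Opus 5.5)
Your outline is viable, but it stops exactly at the step that makes up the lemma. You list the constraints on $1/s_0$ and $1/r$, then assert that taking $1/r$ slightly above $\max\{1/s_0,1/\beta_0\}$ and below $\min\{1,1/s_0+2/N\}$ works ``provided the intervals overlap''. You propose to secure this by ``exploiting the openness of all constraints'' and ``perturbing $\theta_1$''. Openness lets you perturb an admissible choice once you have one, but it does not produce one, so as written the existence of admissible exponents is not proved.

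The missing inequality is $1/\beta_0 < 1/s_0 + 2/N$, and it in fact holds for every $s_0$:
\begin{itemize}
\item Since $1/\theta_1+1/\theta_2=(N+\alpha)/N$, you have $1/\beta_0 = 1/\theta_2-\alpha/N = 1-1/\theta_1$.
\item \eqref{eqn:thetas} and $p\ge 2$ give $\theta_1<2^*/p\le N/(N-2)$, hence $1-1/\theta_1<2/N$. This is trivial if $N\le 2$.
\item The remaining requirements ($s_2\ge 2$ and $\beta_1>N/(N-\alpha)$) hold once the excess of $1/r$ over $\max\{1/s_0,1/\beta_0\}$ is small, because $1/s_0\le 1/2\le (p-1)/2$ and $1/\beta_0<(N-\alpha)/N$.
\end{itemize}
So your worry about low dimensions and $p+q$ near $2^*_\alpha$ is unfounded, but the computation has to be done. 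A minor point: $s_2\ge 2$ does not imply $s_2\ge p-1$ when $p>3$. The latter follows instead from $(p-1)/s_2=1/r-1/\beta_0<1$.

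The paper avoids both this interval bookkeeping and your bootstrap to $L^\infty$ by making an explicit choice:
\begin{itemize}
\item $s_0=s_1=s_2=s_3=\theta_1 p$ and $t_0=\dots=t_3=\theta_2 q$;
\item $r=s_0'$ and $h=t_0'$;
\item $\beta_0=\beta_1=(1/\theta_2-\alpha/N)^{-1}$ and $\gamma_0=\gamma_1=(1/\theta_1-\alpha/N)^{-1}$.
\end{itemize}
With this choice, each equality in Condition \ref{cond:ma-zhao} reduces to $1/\theta_1+1/\theta_2=(N+\alpha)/N$. All Lebesgue exponents lie in $(2,2^*)$, so the memberships follow from $H^1$ and Proposition \ref{prop:HLS}. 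The membership $u\in W^{2,r}$ follows from H\"older together with Lemmas \ref{lem:integral-system} and \ref{lem:G_2-embedding}. Finally, $W^{2,r}\hookrightarrow L^{s_0}$ holds because $2<s_0<2^*$ gives $s_0'<s_0<Ns_0'/(N-2s_0')$ when $2s_0'<N$, and $s_0'<s_0<\infty$ suffices otherwise.
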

\begin{proof}
Hypothesis \eqref{H2} implies \eqref{H1}, so we can fix
$\theta_1, \theta_2 \in \ooi{1, \frac{N}{\alpha}}$
such that \eqref{eqn:thetas} holds. Let
\begin{multicols}{2}
\begin{itemize}
\item
$s_0 = s_1 = s_2 = s_3 = \theta_1 p \in \ooi{2, 2^*}$\,,
\item
$t_0 = t_1 = t_2 = t_3 = \theta_2 q \in \ooi{2, 2^*}$\,,
\item
$r = s_0' \in \ooi{\parens{2^*}', 2}$\,,
\item
$h = t_0' \in \ooi{\parens{2^*}', 2}$\,,
\item
$
\beta = \beta_1
=
\parens*{\frac{1}{\theta_2} - \frac{\alpha}{N}}^{- 1}
\in
\ooi*{\frac{N}{N - \alpha}, \infty}
$\, ,
\item
$
\gamma = \gamma_1
=
\parens*{\frac{1}{\theta_1} - \frac{\alpha}{N}}^{- 1}
\in
\ooi*{\frac{N}{N - \alpha}, \infty}
$\, .
\end{itemize}
\end{multicols}

There are two possible cases.
\subparagraph{Case $2 r < N$.}
This case occurs precisely when either (i) $N = 3$ and $s_0 > 3$ or (ii) $N \geq 4$. On one hand,
$s_0 < \frac{s_0'N}{N - 2 s_0'} = \frac{rN}{N - 2 r}$
because $s_0 < \frac{2 N}{N - 2}$. On the other hand,
$s_0 > s_0' = r$ because $s_0 > 2$. In particular, it follows from the Sobolev embeddings that
$
W^{2, r} \parens{\real^N} \hookrightarrow L^{s_0} \parens{\real^N}
$. 

\subparagraph{Case $2 r \geq N$.}
This case occurs precisely when either (i) $N \in \set{1, 2}$ or (ii) $N = 3$ and $2 < s_0 \leq 3$. Either way, we have
$1 < r = s_0' < s_0 < \infty$ because $2 < s_0 < \infty$. Once again, it follows from the Sobolev embeddings that
$
W^{2, r} \parens{\real^N} \hookrightarrow L^{s_0} \parens{\real^N}
$.
Indeed,
$W^{2, r} \parens{\real^N} \hookrightarrow L^s \parens{\real^N}$
for every $s \in \coi{r, \infty}$ as stated in \cite[Theorem 4.12]{adamsSobolevSpaces2003}. 

\

Similar arguments show that
$
W^{2, h} \parens{\real^N} \hookrightarrow L^{t_0} \parens{\real^N}
$
and the equalities in Condition \ref{cond:ma-zhao} may be verified with elementary computations, hence the result.
\end{proof}

We finish the section with an analog to \cite[(3.6) and (3.8)]{vairaExistenceBoundStates2013} or \cite[(4.25)--(4.27)]{wangClassificationQualitativeAnalysis2024}.

\begin{lem}
\label{lem:estimates}
Suppose that the conditions in Theorem \ref{thm:most-general} are satisfied. Then there exist positive constants $K_1, K_2, K_3, K_4$ such that
\begin{equation}
\label{eqn:estimate-phi-psi:1}
\norm{\phi_{v, \lambda} - \phi_v}_{L^{\beta_0} \parens{\Sigma_\lambda^{\phi_v}}}
\leq
K_1
\norm{v_\lambda}_{
	L^{t_3} \parens{\Sigma_\lambda^v}
}^{q - 1}
\norm{v_\lambda - v}_
	{L^{t_0} \parens{\Sigma_\lambda^v}};
\end{equation}
\begin{equation}
\label{eqn:estimate-phi-psi:2}
\norm{\psi_{u, \lambda} - \psi_u}_{L^{\gamma_0} \parens{\Sigma_\lambda^{\psi_u}}}
\leq
K_2
\norm{u_\lambda}_{L^{s_3} \parens{\Sigma_\lambda^u}}^{p - 1}
\norm{u_\lambda - u}_{L^{s_0} \parens{\Sigma_\lambda^u}};
\end{equation}
\begin{multline}
\label{eqn:estimate-u_lambda-u:1}
\norm{u_\lambda - u}_{L^{s_0} \parens{\Sigma_\lambda^u}}
\leq
\\
\leq
K_3
\parens*{
	\norm{\phi_{v, \lambda}}_{L^{\beta_1} \parens{\Sigma_\lambda^u}}
	\norm{u_\lambda}_{L^{s_1} \parens{\Sigma_\lambda^u}}^{p - 2}
	\norm{u_\lambda - u}_{L^{s_0} \parens{\Sigma_\lambda^u}}
	+
	\norm{u}_{L^{s_2}}^{p - 1}
	\norm{v_\lambda}_{L^{t_3} \parens{\Sigma_\lambda^v}}^{q - 1}
	\norm{v_\lambda - v}_{L^{t_0} \parens{\Sigma_\lambda^v}}
}
\end{multline}
and
\begin{multline}
\label{eqn:estimate-u_lambda-u:2}
\norm{v_\lambda - v}_{L^{t_0} \parens{\Sigma_\lambda^v}}
\leq
\\
\leq
K_4
\parens*{
	\norm{\psi_{u, \lambda}}_{L^{\gamma_1} \parens{\Sigma_\lambda^v}}
	\norm{v_\lambda}_{L^{t_1} \parens{\Sigma_\lambda^v}}^{q - 2}
	\norm{v_\lambda - v}_{L^{t_0} \parens{\Sigma_\lambda^v}}
	+
	\norm{v}_{L^{t_2}}^{q - 1}
	\norm{u_\lambda}_{L^{s_3} \parens{\Sigma_\lambda^u}}^{p - 1}
	\norm{u_\lambda - u}_{L^{s_0} \parens{\Sigma_\lambda^u}}
}
\end{multline}
for every $\lambda \leq 0$.
\end{lem}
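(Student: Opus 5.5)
The estimates come from the pointwise inequalities of Lemma \ref{lem:preliminary-inequalities}, followed by the Hardy--Littlewood--Sobolev inequality (Proposition \ref{prop:HLS}, item 2), the mapping property of the Bessel potential (Lemma \ref{lem:G_2-embedding}) and Hölder's inequality with the exponent relations of Condition \ref{cond:ma-zhao}. The two $\phi,\psi$ estimates come first, because they feed into the $u,v$ estimates.

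\emph{Step 1: estimates \eqref{eqn:estimate-phi-psi:1} and \eqref{eqn:estimate-phi-psi:2}.} By \eqref{lem:preliminary-inequalities:7}, on $\Sigma_\lambda^{\phi_v}$ we have
\[
0\le \phi_{v,\lambda}-\phi_v\le q\, I_\alpha\ast\bigl(\chi_{\Sigma_\lambda^v} v_\lambda^{q-1}(v_\lambda-v)\bigr).
\]
Apply HLS with $r=\theta_2\in\ooi{1,N/\alpha}$. Since $\frac1{\beta_0}=\frac1{\theta_2}-\frac\alpha N$, the output exponent is $\frac{N\theta_2}{N-\alpha\theta_2}=\beta_0$. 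This gives the bound $C\|\chi_{\Sigma_\lambda^v}v_\lambda^{q-1}(v_\lambda-v)\|_{L^{\theta_2}}$. Hölder's inequality with $\frac{q-1}{t_3}+\frac1{t_0}=\frac1{\theta_2}$ turns this into $\|v_\lambda\|^{q-1}_{L^{t_3}(\Sigma_\lambda^v)}\|v_\lambda-v\|_{L^{t_0}(\Sigma_\lambda^v)}$, which is \eqref{eqn:estimate-phi-psi:1}. The estimate \eqref{eqn:estimate-phi-psi:2} follows in the same way from \eqref{lem:preliminary-inequalities:8}, using $\gamma_0,\theta_1,s_3,s_0$.

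\emph{Step 2: estimates \eqref{eqn:estimate-u_lambda-u:1} and \eqref{eqn:estimate-u_lambda-u:2}.} By \eqref{lem:preliminary-inequalities:3}, on $\Sigma_\lambda^u$ the function $u_\lambda-u$ is bounded by $G_2^\tau\ast F_1+G_2^\tau\ast F_2$, where
\[
F_1=\chi_{\Sigma_\lambda^u}\phi_{v,\lambda}u_\lambda^{p-2}(u_\lambda-u)
\quad\text{and}\quad
F_2=\chi_{\Sigma_\lambda^{\phi_v}}u^{p-1}(\phi_{v,\lambda}-\phi_v),
\]
both nonnegative. I take the $L^{s_0}(\Sigma_\lambda^u)$ norm. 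By Lemma \ref{lem:G_2-embedding} and the embedding $W^{2,r}\hookrightarrow L^{s_0}$ from Condition \ref{cond:ma-zhao}, we get $\|G_2^\tau\ast F\|_{L^{s_0}}\le C\|F\|_{L^r}$. For $F_1$, Hölder's inequality with $\frac1{\beta_1}+\frac{p-2}{s_1}+\frac1{s_0}=\frac1r$ gives the first term. Here $p\ge2$ is used, so that the power $p-2$ is nonnegative and the norm $\|u_\lambda\|^{p-2}_{L^{s_1}}$ makes sense; when $p=2$ this factor is simply absent. For $F_2$, Hölder's inequality with $\frac{p-1}{s_2}+\frac1{\beta_0}=\frac1r$ gives $\|u\|^{p-1}_{L^{s_2}}\|\phi_{v,\lambda}-\phi_v\|_{L^{\beta_0}(\Sigma_\lambda^{\phi_v})}$, and Step 1 then yields the second term. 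Both are bounded with constant $K_3$. The $v$ estimate \eqref{eqn:estimate-u_lambda-u:2} is symmetric.

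\emph{Step 3: finiteness and the main obstacle.} The main care point is making sure every norm is finite, so the inequalities are meaningful, and that the constants do not depend on $\lambda$. Independence of $\lambda$ holds because only the HLS, Bessel-operator and embedding constants appear. For finiteness:
\begin{itemize}
\item $\|u_\lambda\|_{L^{s}(\Sigma_\lambda)}\le\|u\|_{L^s}$, since reflection preserves norms.
\item $\|\phi_{v,\lambda}\|_{L^{\beta_1}}=\|\phi_v\|_{L^{\beta_1}}$, which is finite by the condition $I_\alpha\ast v^q\in L^{\beta_1}$.
\item $u_\lambda-u\in L^{s_0}$.
\item On $\Sigma_\lambda^{\phi_v}$, $|\phi_{v,\lambda}-\phi_v|\le\phi_{v,\lambda}+\phi_v\in L^{\beta_0}$.
\end{itemize}
One subtlety is that \eqref{lem:preliminary-inequalities:3} only holds pointwise for $x\in\Sigma_\lambda^u$. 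This is fine, because the norm on the left is taken only over $\Sigma_\lambda^u$, and extending the right-hand side to all of $\real^N$ only enlarges it.
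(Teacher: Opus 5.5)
Your proposal is correct and follows essentially the same route as the paper. For the $\phi_v,\psi_u$ estimates, both you and the paper combine the pointwise bounds of Lemma \ref{lem:preliminary-inequalities} with Hardy--Littlewood--Sobolev and H\"older under the exponent relations of Condition \ref{cond:ma-zhao}; for the $u,v$ estimates, both use the Bessel mapping property with $W^{2,r}\hookrightarrow L^{s_0}$ plus H\"older and then substitute the first estimates, while your remarks on finiteness, $\lambda$-independence of the constants, and taking the second source term over $\Sigma_\lambda^{\phi_v}$ are accurate added detail.
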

\begin{proof}
The estimates follow from similar arguments, so we only show that \eqref{eqn:estimate-phi-psi:2}, \eqref{eqn:estimate-u_lambda-u:2} are satisfied.

\paragraph{Proof of \eqref{eqn:estimate-phi-psi:2}.}
Due to \eqref{lem:preliminary-inequalities:8},
\[
\norm{\psi_{u, \lambda} - \psi_u}_{L^{\gamma_0} \parens{\Sigma_\lambda^{\psi_u}}}
\leq
p
\norm*{
	\int_{\Sigma_\lambda^u}
		I_\alpha \parens{\cdot - y}
		u_\lambda \parens{y}^{p - 1}
		\parens*{
			u_\lambda \parens{y} - u \parens{y}
		}
	\dif y
}_{L^{\gamma_0} \parens{\Sigma_\lambda^{\psi_u}}}.
\]
In view of Proposition \ref{prop:HLS}, there exists a positive constant $K_2$ such that
\[
\norm*{
	\int_{\Sigma_\lambda^u}
		I_\alpha \parens{\cdot - y}
		u_\lambda \parens{y}^{p - 1}
		\parens*{
			u_\lambda \parens{y} - u \parens{y}
		}
	\dif y
}_{L^{\gamma_0} \parens{\Sigma_\lambda^{\psi_u}}}
\leq
K_2
\norm{u_\lambda}_
	{L^{s_3} \parens{\Sigma_\lambda^u}}^{p - 1}
\norm{u_\lambda - u}_
	{L^{s_0} \parens{\Sigma_\lambda^u}},
\]
hence the result.

\paragraph{Proof of \eqref{eqn:estimate-u_lambda-u:2}.}
Considering \eqref{lem:preliminary-inequalities:5}, it follows that
\[
\begin{aligned}
0
\leq
v_\lambda \parens{x} - v \parens{x}
&\leq
\frac{2 q (q - 1)}{p + q}
\int_{\Sigma_\lambda^v}
	G_2^\eta \parens{x - y}
	\psi_{u, \lambda} \parens{y}
	v_\lambda \parens{y}^{q - 2}
	\parens*{v_\lambda \parens{y} - v(y)}
\dif y
\\
&+
\frac{4 q}{p + q}
\int_{\Sigma_\lambda^{\psi_u}}
	G_2^\eta \parens{x - y}
	v \parens{y}^{q - 1}
	\parens*{
		\psi_{u, \lambda} \parens{y}
		-
		\psi_u \parens{y}
	}
\dif y.
\end{aligned}
\]
Due to Lemma \ref{lem:G_2-embedding}, there exists a positive constant $K_4'$ such that
\begin{align*}
\norm{v_\lambda - v}_{L^{t_0} \parens{\Sigma_\lambda^v}}
&\leq
K_4'
\norm*{
	\psi_{u, \lambda}
	v_\lambda^{q - 2}
	\parens{v_\lambda - v}
}_{L^h \parens{\Sigma_\lambda^v}}
+
K_4'
\norm*{
	v^{q - 1}
	\parens{\psi_{u, \lambda} - \psi_u}
}_{L^h \parens{\Sigma_\lambda^v}}
\\
&\leq
K_4'
\norm{\psi_{u, \lambda}}_{L^{\gamma_1} \parens{\Sigma_\lambda^v}}
\norm{v_\lambda}_{L^{t_1} \parens{\Sigma_\lambda^v}}^{q - 2}
\norm{v_\lambda - v}_{L^{t_0} \parens{\Sigma_\lambda^v}}
+
K_4	'
\norm{v}_{L^{t_2} \parens{\Sigma_\lambda^{\psi_u}}}^{q - 1}
\norm{\psi_{u, \lambda} - \psi_u}_{L^{\gamma_0} \parens{\Sigma_\lambda^{\psi_u}}}.
\end{align*}
At this point, the result follows from \eqref{eqn:estimate-phi-psi:2}.
\end{proof}

\begin{proof}[\textbf{Proof of Theorem \ref{thm:most-general}}]

\noindent {\bf Step 1:} \emph{Beginning of the moving plane method.} The first goal is to prove that
\begin{equation}
\label{proof:-1}
\text{
there exists
$M \geq 0$
such that
$u \geq u_\lambda, v \geq v_\lambda$
in
$\Sigma_\lambda$
for every
$\lambda \leq - M$.
}
\end{equation}
It suffices to show that
\begin{equation}
\label{proof:0}
\text{
there exists
$M \geq 0$
such that
$
\norm{u_\lambda - u}_
	{L^{s_0} \parens{\Sigma_\lambda^u}}
=
\norm{v_\lambda - v}_
	{L^{t_0} \parens{\Sigma_\lambda^v}}
=
0
$
for every
$\lambda \leq - M$.
}
\end{equation}
Indeed, $u \geq u_\lambda$ in
$\Sigma_\lambda \setminus \Sigma_\lambda^u$ and
$u < u_\lambda$ in $\Sigma_\lambda^u$ by definition. As such,
$
\norm{u_\lambda - u}_
	{L^{s_0} \parens{\Sigma_\lambda^u}}
=
0
$ implies
$\Sigma_\lambda^u = \emptyset$, hence the result.

\noindent{\bf Proof of \eqref{proof:0}.}
Let $K_3, K_4$ denote the positive constants furnished by Lemma \ref{lem:estimates}. Due to Lemma \ref{lem:limit-of-integrals}, we can fix $\lambda_1 \leq 0$ such that if
$\lambda \leq \lambda_1$, then
\begin{equation}
\label{proof:0.5}
K_4
\norm{\psi_{u, \lambda}}_
	{L^{\gamma_1} \parens{\Sigma_\lambda^v}}
\norm{v_\lambda}_
	{L^{t_1} \parens{\Sigma_\lambda^v}}^{q - 2}
<
\frac{1}{2}
\quad \text{and} \quad
K_4
\norm{v}_{L^{t_2}}^{q - 1}
\norm{u_\lambda}_{L^{s_3} \parens{\Sigma_\lambda^u}}^{p - 1}
<
\frac{1}{2},
\end{equation}
so it follows from \eqref{eqn:estimate-u_lambda-u:2} that
\begin{equation}
\label{proof:1}
\norm{v_\lambda - v}_{L^{t_0} \parens{\Sigma_\lambda^v}}
\leq
\norm{u_\lambda - u}_{L^{s_0} \parens{\Sigma_\lambda^u}}.
\end{equation}
Similarly, we can fix $\lambda_2 \leq \lambda_1$ such that if
$\lambda \leq \lambda_2$, then
\begin{equation}
\label{proof:1.5}
K_3
\norm{\phi_{v, \lambda}}_{L^{\beta_1} (\Sigma_\lambda^u)}
\norm{u_\lambda}_{L^{s_1} \parens{\Sigma_\lambda^u}}^{p - 2}
<
\frac{1}{4}
\quad \text{and} \quad
K_3
\norm{u}_{L^{s_2}}^{p - 1}
\norm{v_\lambda}_{L^{t_3} \parens{\Sigma_\lambda^v}}^{q - 1}
<
\frac{1}{4}.
\end{equation}
In view of \eqref{proof:1} and \eqref{eqn:estimate-u_lambda-u:1}, we obtain
\[
3 \norm{u_\lambda - u}_{L^{s_0} \parens{\Sigma_\lambda^u}}
\leq
\norm{v_\lambda - v}_{L^{t_0} \parens{\Sigma_\lambda^v}}
\leq
\norm{u_\lambda - u}_{L^{s_0} \parens{\Sigma_\lambda^u}}
\]
for every $\lambda \leq \lambda_2$. That is,
\[
\norm{u_\lambda - u}_{L^{s_0} \parens{\Sigma_\lambda^u}}
=
\norm{v_\lambda - v}_{L^{t_0} \parens{\Sigma_\lambda^v}}
=
0
\]
for every $\lambda \leq \lambda_2$. As such, \eqref{proof:0} holds with $M := |\lambda_2|$.

\medbreak

\noindent \textbf{Step 2:} \emph{Existence of hyperplane of symmetry.}
We want to prove that there exists
$\lambda_0 \in \real$ such that
\begin{equation}
\label{proof:2}
u = u_{\lambda_0}
\quad \text{and} \quad
v = v_{\lambda_0}
\quad \text{in} \quad
\Sigma_{\lambda_0}.
\end{equation}
Consider the set
\[
\mathcal{A}
:=
\left\{
	\lambda \in \real:
	u \geq u_\lambda
	~ \text{in} ~ \Sigma_\lambda, ~
	v \geq v_\lambda
	~ \text{in} ~ \Sigma_\lambda
	~ \text{and \eqref{eqn:or-condition} is satisfied}
\right\},
\]
where
\begin{equation}
\label{eqn:or-condition}
u|_{\Sigma_\lambda}
\neq
u_\lambda|_{\Sigma_\lambda}
\quad \text{or} \quad
v|_{\Sigma_\lambda}
\neq
v_\lambda|_{\Sigma_\lambda}.
\end{equation}
Due to \eqref{proof:-1}, we know that there exists
$\lambda \in \real$ such that
$u \geq u_\lambda$ and $v \geq v_\lambda$ in $\Sigma_\lambda$. The equality $\mathcal{A} = \emptyset$ implies
$u = u_\lambda, v = v_\lambda$ in $\Sigma_\lambda$, in which case \eqref{proof:2} is satisfied with $\lambda_0 = \lambda$. As such, we will henceforth suppose that $\mathcal{A} \neq \emptyset$. Let us show that $\lambda_0 := \sup \mathcal{A} < \infty$ and \eqref{proof:2} is satisfied.

\paragraph{Proof that $\lambda_0 < \infty$.}
By contradiction, suppose that $\lambda_0 = \infty$. Due to the Pigeonhole Principle, we can fix
$
\set{\lambda_n}_{n \in \nat} \subset \mathcal{A}
$
such that
$\lambda_n \to \infty$ and one of the following conditions is satisfied for every $n \in \nat$:
\begin{itemize}
\item
$u_{\lambda_n} \neq u$ in $\Sigma_{\lambda_n}$;
\item
$v_{\lambda_n} \neq v$ in $\Sigma_{\lambda_n}$.
\end{itemize}
Suppose, without loss of generality, that $u_{\lambda_n} \neq u$ in $\Sigma_{\lambda_n}$ for every $n \in \nat$. On one hand, it follows from the definition of $\mathcal{A}$ that
\begin{equation}
\label{eqn:limits}
\int_{\Sigma_{\lambda_n}} u \parens{x}^{s_0} \dif \mu
>
\int_{\Sigma_{\lambda_n}}
	u_{\lambda_n} \parens{x}^{s_0}
\dif \mu
=
\int_{\real^N \setminus \Sigma_{\lambda_n}}
	u \parens{x}^{s_0}
\dif \mu
\end{equation}
for every $n \in \nat$. On the other hand, due to the facts that
\begin{itemize}
\item
$u \in L^{s_0} (\real^N)$ and
\item
$\parens{\Sigma_{\lambda_n}}_{n \in \nat}$
is a nonincreasing sequence of sets such that
$\chi_{\Sigma_{\lambda_n}} \to 0$ as $n \to \infty$,
\end{itemize}
we obtain
\[
\int_{\Sigma_{\lambda_n}} u^{s_0} \dif \mu
\xrightarrow[n \to \infty]{}
0
\quad \text{and} \quad
\int_{\real^N \setminus \Sigma_{\lambda_n}} u^{s_0} \dif \mu
\xrightarrow[n \to \infty]{}
\norm{u}_{L^{s_0}}^{s_0}
>
0.
\]
These limits contradict \eqref{eqn:limits}, hence the result.

\paragraph{A simplifying assumption: $\lambda_0 = 0$.}
Let us show that we can suppose without loss of generality that
$\lambda_0 = 0$. Indeed, consider the functions $U, V \colon \real^N \to [0, \infty[$ defined as
\[
U \parens{x_1, x_2, \ldots, x_N}
=
u \parens*{\parens{x^{\frac{\lambda_0}{2}}}^0}
=
u \parens{x_1 - \lambda_0, x_2, \ldots, x_N}
\]
and
\[
V \parens{x_1, x_2, \ldots, x_N}
=
v \parens*{\parens{x^{\frac{\lambda_0}{2}}}^0}
=
v \parens{x_1 - \lambda_0, x_2, \ldots, x_N}.
\]
System \eqref{eqn:Hartree-system:2} is invariant by translation, so $\parens{U, V}$ is still a solution to \eqref{eqn:Hartree-system:2}. Consider the set
\[
\mathcal{B}
:=
\set*{
	\lambda \in \real:
	\norm{U_\lambda - U}_
		{L^{s_0} \parens{\Sigma_\lambda^u}}
	=
	\norm{V_\lambda - V}_
		{L^{t_0} \parens{\Sigma_\lambda^v}}
	=
	0
	~ \text{and \eqref{eqn:or-condition:2} is satisfied}
},
\]
where
\begin{equation}
\label{eqn:or-condition:2}
\norm{U - U_\lambda}_
	{L^{s_0} \parens{\Sigma_\lambda \setminus \Sigma_\lambda^u}}
>
0
\quad \text{or} \quad
\norm{V - V_\lambda}_
	{L^{t_0} \parens{\Sigma_\lambda \setminus \Sigma_\lambda^v}}
>
0\, .
\end{equation}
Let us prove that
$
\mathcal{B}
=
\set{
	\lambda - \lambda_0:
	\lambda \in \mathcal{A}
}
$,
from which we deduce that $\sup \mathcal{B} = 0$.

\noindent {\bf Proof that
$
\mathcal{B}
\subset
\set{\lambda - \lambda_0 : \lambda \in \mathcal{A}}
$.
}
Suppose that $\mu \in \mathcal{B}$. Given $x \in \Sigma_\mu$,
\begin{align*}
u \parens{x_1 - \lambda_0, x_2, \ldots, x_N}
=
U \parens{x}
\geq
U_\mu \parens{x}
&=
U \parens{x^\mu};
\\
&=
u \parens{2 \mu - x_1 - \lambda_0, x_2, \ldots, x_N};
\\
&=
u \parens*{
	2 \parens{\mu - \lambda_0} - \parens{x_1 - \lambda_0},
	x_2, \ldots, x_N
};
\\
&=
u_{\mu - \lambda_0} \parens{x_1 - \lambda_0, x_2, \ldots, x_N}.
\end{align*}
It follows that $u \geq u_{\mu - \lambda_0}$ in
$\Sigma_{\mu - \lambda_0}$. One similarly shows that
$v \geq v_{\mu - \lambda_0}$ in $\Sigma_{\mu - \lambda_0}$.
By hypothesis, either $U \neq U_\mu$ or $V \neq V_\mu$ in
$\Sigma_\mu$. Suppose without loss of generality that
$x_\mu \in \Sigma_\mu$ is such that $U (x_\mu) \neq U_\mu (x_\mu)$. That is,
\[
u (x_\mu - \lambda_0)
\neq
U (2 \mu - x_{\mu, 1}, x_{\mu, 2}, \ldots, x_{\mu, N})
=
u \parens*{
	2 (\mu - \lambda_0) - (x_{\mu, 1} - \lambda_0),
	x_{\mu, 2},
	\ldots,
	x_{\mu, N}
},
\]
from which we deduce that $u \neq u_{\mu - \lambda_0}$ in
$\Sigma_{\mu - \lambda_0}$. Finally, we obtain
$\mu - \lambda_0 \in \mathcal{A}$.

\noindent{\bf Proof that
$
\mathcal{B}
\supset
\set{\lambda - \lambda_0 : \lambda \in \mathcal{A}}
$.
}
Analogous to the proof of the reverse inclusion.
\paragraph{Proof that \eqref{proof:2} is satisfied.}
It suffices to prove that
\begin{equation}
\label{eqn:the-condition}
u \geq u_0, v \geq v_0
\quad \text{in} \quad \Sigma_0
\quad \text{and} \quad 0 \not \in \mathcal{A}.
\end{equation}
Indeed, if \eqref{eqn:the-condition} is satisfied, then \eqref{eqn:or-condition} is not satisfied because
$0 \not \in \mathcal{A}$. We deduce that $u = u_0$ and $v = v_0$ in $\Sigma_0$.

\noindent{\bf Proof that $u \geq u_0$, $v \geq v_0$ in $\Sigma_0$.}
Consider the function
$\delta \colon ]- \infty, 0] \times \real^N \to \real$
defined as
\[\delta (\lambda, x) = u \parens{x} - u_\lambda \parens{x}.\]
On one hand, the function $\delta$ is continuous, so
$\delta^{- 1} ([0, \infty[)$ is a closed subset of
$]- \infty, 0] \times \real^N$. On the other hand, the inclusion
\[
\mathcal{B}_{\mathcal{A}}
:=
\set*{\set{\lambda} \times \Sigma_\lambda: \lambda \in \mathcal{A}}
\subset
\delta^{- 1} \parens*{[0, \infty[}
\]
holds by definition. As such, we deduce that
\[
\set{0} \times \Sigma_0
\subset
\overline{\mathcal{B}_\mathcal{A}}
\subset
\delta^{- 1} \parens*{[0, \infty[},
\]
that is, $u \geq u_0$ in $\Sigma_0$. An analogous argument shows that $v \geq v_0$ in $\Sigma_0$.

\noindent{\bf Proof that $0 \not \in \mathcal{A}$.}
We only have to show that the following implication is satisfied:
\begin{equation}
\label{proof:3}
\text{if} ~ \lambda' \in \mathcal{A},
~ \text{then there exists} ~
\eps > 0
~ \text{such that} ~
\coi{\lambda', \lambda' + \eps} 
\subset
\mathcal{A}.
\end{equation}
Indeed, if \eqref{proof:3} holds, then the membership statement
$0 \in \mathcal{A}$ contradicts the fact that
$\sup \mathcal{A} = \lambda_0 = 0$. We proceed to the proof of \eqref{proof:3}. Let us show that there exists $\eps > 0$ such that if $\lambda' < \lambda < \lambda ' + \eps$, then
\begin{itemize}
\item
$
u|_{\Sigma_\lambda}
\neq
u_\lambda|_{\Sigma_\lambda}
$
or
$
v|_{\Sigma_\lambda}
\neq
v_\lambda|_{\Sigma_\lambda}
$
(\emph{first condition});
\item
$u \geq u_\lambda$ and $v \geq v_\lambda$ in $\Sigma_\lambda$
(\emph{second condition}).
\end{itemize}

\noindent{\bf First condition.}
Suppose, without loss of generality, that
$
u|_{\Sigma_{\lambda '}}
\neq
u_{\lambda'}|_{\Sigma_{\lambda '}}
$.
In particular, there exists $x_{\lambda '} \in \Sigma_{\lambda '}$ such that
$
u \parens{x_{\lambda '}}
>
u_{\lambda '}
\parens{x_{\lambda '}}
$.
The function $u$ is continuous, so there exists
$\eps > 0$ such that
$u \parens{x_{\lambda '}}
>
u_{\lambda} \parens{x_{\lambda '}}
$
whenever
$
\lambda'
<
\lambda
<
\lambda ' + \eps
$,
hence the result.

\noindent{\bf Preliminaries for the second condition.}
First, we want to show that
\begin{equation}
\label{proof:4}
u > u_{\lambda'}, v > v_{\lambda'}
\quad \text{in} \quad
\interior \Sigma_{\lambda'}.
\end{equation}
On one hand, Lemma \ref{lem:u_lambda-u} shows that
\begin{align*}
u \parens{x} - u_{\lambda'} \parens{x}
&=
\frac{2 p}{p + q}
\int_{\Sigma_{\lambda'}}
	\parens*{
		G_2^\tau \parens{x - y}
		-
		G_2^\tau (x^{\lambda'} - y)
	}	\parens*{
		\phi_v \parens{y}
		u \parens{y}^{p - 1}
		-
		\phi_{v, {\lambda'}} \parens{y}
		u_{\lambda'} \parens{y}^{p - 1}
	}
\dif y
\end{align*}
and
\[
\phi_v \parens{x} - \phi_{v, {\lambda'}} \parens{x}
=
\int_{\Sigma_{\lambda'}}
	\parens*{I_\alpha \parens{x - y} - I_\alpha (x^{\lambda'} - y)}
	\parens*{
		v \parens{y}^q
		-
		v_{\lambda'} \parens{y}^q
	}
\dif y
\]
for every $x \in \Sigma_{\lambda'}$. On the other hand, Lemma \ref{lem:preliminary-inequalities} shows that
\[
G_2^\tau \parens{x - y}
-
G_2^\tau \parens{x^{\lambda'} - y}
>
0
\quad \text{and} \quad
I_\alpha \parens{x - y}
-
I_\alpha \parens{x^{\lambda'} - y}
>
0
\]
for every
$x \in \interior \Sigma_{\lambda'}$
and
$y \in \interior \Sigma_{\lambda'} \setminus \set{x}$.
As such, we only have to show that
\begin{equation}
\label{eqn:sufficient-inequality}
\phi_v
u ^{p - 1}
-
\phi_{v, {\lambda'}}
u_{\lambda'} ^{p - 1}
>
0
\quad
\text{in a nonempty open subset of}
\quad
\Sigma_{\lambda'}
\end{equation}
to deduce that
$u > u_{\lambda'}$ in $\interior \Sigma_{\lambda'}$.
Indeed, \eqref{eqn:sufficient-inequality} holds because the membership statement $\lambda' \in \mathcal{A}$ implies
$u > u_{\lambda'}$ or $v > v_{\lambda'}$ in a nonempty open subset of $\Sigma_{\lambda'}$. The proof that $v > v_{\lambda'}$ in
$\interior \Sigma_{\lambda'}$ is analogous.

Consider the function
$\chi_{\Sigma_{\lambda' + \eps}^u} \colon \real^N \to \set{0, 1}$
defined as
\[
\chi_{\Sigma_{\lambda' + \eps}^u}
\parens{x}
=
\begin{cases}
1,
&\text{if} ~
u_{\lambda' + \eps} \parens {x}
>
u \parens{x}
~ \text{and} ~
x_1 \geq \lambda' + \eps;
\\
0,
&\text{if} ~
u_{\lambda' + \eps} \parens {x}
\leq
u \parens{x}
~ \text{and} ~
x_1 \geq \lambda' + \eps;
\\
0,
&\text{if} ~
x_1 < \lambda' + \eps.
\end{cases}
\]
Let us show that
$\chi_{\Sigma_{\lambda' + \eps}^u}$,
$\chi_{\Sigma_{\lambda' + \eps}^v} \to 0$ as
$\eps \to 0^+$. Suppose that $x \in \real^N$ is such that
$\chi_{\Sigma_{\lambda' + \eps}^u} \parens{x}$
does not converge to zero. In particular, there exist $\delta > 0$ and
$
\set{\eps_n}_{n \in \nat}
\subset
\ooi{0, \infty}
$
such that
$
\chi_{\Sigma_{\lambda' + \eps_n}^u} \parens{x}
\geq
\delta
$
for every $n \in \nat$ and $\eps_n \to 0$ as
$n \to \infty$. On one hand, $x \in \interior \Sigma_{\lambda'}$
because there exists $n \in \nat$ such that
$\chi_{\Sigma_{\lambda' + \eps_n}^u} \parens{x} = 1 $.
On the other hand, the continuity of $u$ implies
\[
1
=
\lim_{n \to \infty} u_{\lambda' + \eps_n} \parens{x}
=
u_{\lambda'} \parens{x}
\geq
u \parens{x}.
\]
We just obtained a contradiction with \eqref{proof:4}, hence the result.

Next, we want to show that
\begin{equation}
\label{proof:5}
\norm{u_{\lambda' + \eps}}_{
	L^{s_j} \parens{\Sigma_{\lambda' + \eps}^u}
},
~
\norm{v_{\lambda' + \eps}}_{
	L^{t_j} \parens{\Sigma_{\lambda' + \eps}^v}
},
~
\norm{\psi_{u, \lambda' + \eps}}_{
	L^{\gamma_j} \parens{\Sigma_{\lambda' + \eps}^u}
},
~
\norm{\phi_{v, \lambda' + \eps}}_{
	L^{\beta_j} \parens{\Sigma_{\lambda' + \eps}^v}
}
\xrightarrow[\eps \to 0^+]{}
0.
\end{equation}
These limits are proved similarly, so we will only show that
$
\norm{v_{\lambda' + \eps}}_{
	L^{t_j} \parens{\Sigma_{\lambda' + \eps}^v}
}
\to
0
$
as $\eps \to 0^+$. Clearly,
\[
\norm{v_{\lambda' + \eps}}_{
	L^{t_j} \parens{\Sigma_{\lambda' + \eps}^v}
}^{t_j}
=
\int_{\Sigma_{\lambda' + \eps}^v}
	v_{\lambda'}^{t_j}
\dif \mu
+
\int_{\Sigma_{\lambda' + \eps}^v}
	v_{\lambda' + \eps}^{t_j}
	-
	v_{\lambda'}^{t_j}
\dif \mu.
\]
The first integral on the right-hand side tends to zero as
$\eps \to 0^+$ by dominated convergence because
$
v_{\lambda'}^{t_j}
\chi_{\Sigma_{\lambda' + \eps}^v}
\to
0
$
pointwise as $\eps \to 0^+$ and
\[
0
\leq
v_{\lambda'}^{t_j}
\chi_{\Sigma_{\lambda' + \eps}^v}
\leq
v_{\lambda'}^{t_j} \in L^1 
\]
for every $\eps \in \ooi{0, 1}$. Now, we consider the second integral in the right-hand side. The term
\[
\frac{
	v_{\lambda' + \eps} \parens{x}^{t_j}
	-
	v_{\lambda'} \parens{x}^{t_j}
}{2 \eps}
=
\frac{
	v \parens*{2 (\lambda' + \eps) - x_1, x_2, \ldots, x_N}^{t_j}
	-
	v \parens{2 \lambda' - x_1, x_2, \ldots, x_N}^{t_j}
}{2 \eps}
\]
is a Newton quotient. We can use the Cauchy--Schwarz inequality to verify that
\[
\Sigma_{\lambda'} \ni x
\mapsto
t_j
v (x^{\lambda'})^{t_j - 1}
\partial_1 v (x^{\lambda'})
\in
\real
\]
is integrable. Indeed,
$v \in L^{2 \parens{t_j - 1}} \parens{\real^N}$
because $t_j \geq 2$ and it follows from \cite[Theorem 1.4]{boerStandingWavesNonlinear2025} that there exists arbitrarily large $\overline{t} > 1$ such that
$v \in L^{\overline{t}} \parens{\real^N}$.
By dominated convergence, we obtain
\[
\int_{\Sigma_{\lambda' + \eps}^v}
	v_{\lambda' + \eps}^{t_j}
	-
	v_{\lambda'}^{t_j}
\dif \mu
=
2 \eps
\int_{\Sigma_{\lambda' + \eps}^v}
	\frac{
		v_{\lambda' + \eps}^{t_j}
		-
		v_{\lambda'}^{t_j}
	}{2 \eps}
\dif \mu
\xrightarrow[\eps \to 0^+]{}
0
\]
(see \cite[Theorem 2.27]{follandRealAnalysisModern1999}).

\noindent{\bf Second condition.}
Due to the limits in \eqref{proof:5}, we can fix $\eps > 0$ such that \eqref{proof:0.5}, \eqref{proof:1.5} are satisfied for every
$
\lambda \in ]\lambda', \lambda' + \eps[
$.
As such, we can argue as in the proof of \eqref{proof:0} to deduce that
\[
\norm{u_\lambda - u}_{L^{s_0} (\Sigma_\lambda^u)}
=
\norm{v_\lambda - v}_{L^{t_0} (\Sigma_\lambda^v)}
=
0
\]
for every
$\lambda \in \ooi{\lambda', \lambda' + \eps}$.

\medbreak
\noindent \textbf{Step 3:} \emph{Conclusion.} To finish, we can repeat the argument to show that, up to translation, $u, v$ are symmetric with respect to any hyperplane of $\real^N$ passing through the origin, and hence radially symmetric with respect to the origin, and radially decreasing.  Then, applying the Hopf Lemma to both equations in \eqref{eqn:Hartree-system:2}, it follows that $u$ and $v$ are strictly radially decreasing.
\end{proof}

\section{Classification of positive ground states}
\label{classification}

A change of variable shows that if
$u, v \colon \real^N \to \real$
solve
\begin{equation}
\label{eqn:Choquard-system:2}
\begin{cases}
- \Delta u + u
=
\parens*{I_\alpha \ast \abs{v}^p}
\abs{u}^{p - 2} u
&\text{in} ~ \real^N,
\\
- \Delta v + v
=
\parens*{I_\alpha \ast \abs{u}^p}
\abs{v}^{p - 2} v
&\text{in} ~ \real^N,
\end{cases}
\end{equation}
then the pair
$
\tau^{\frac{\alpha + 2}{4 \parens{p - 1}}}
\parens{
	u \parens{\sqrt{\tau} \cdot},
	v \parens{\sqrt{\tau} \cdot}
}
$
solves \eqref{eqn:Choquard-system}. As such, we will henceforth suppose without loss of generality that $\tau = 1$. Let us introduce a minimization problem that is closely related to \eqref{eqn:min-N_A}, i.e.,
\begin{equation}
\label{eqn:min-Q}
\begin{cases}
Q \parens{u}
=
\mathcal{S}
:=
\inf_{
	v \in H^1 \parens{\real^N} \setminus \set{0}
}
Q \parens{v},
\\
u \in H^1 \parens{\real^N} \setminus \set{0},
\end{cases}
\end{equation}
where the nonlinear functional
$
Q
\colon
H^1 \parens{\real^N} \setminus \set{0}
\to
\ooi{0, \infty}
$
is defined as
\[
Q \parens{u}
=
\frac{
		\int_{\real^N}
			\abs{\nabla u}^2 + u^2
		\dif \mu
	}{
		\parens*{
			\int_{\real^N}
				\parens*{I_\alpha \ast \abs{u}^p}
				\abs{u}^p
			\dif \mu
		}^{\frac{1}{p}}
	}\, .
\]
We proceed to a discussion about how \eqref{eqn:min-N_A} and \eqref{eqn:min-Q} compare to each other.

\begin{rmk}
\label{rmk:MorozVanSchaftingen}
It follows from \cite[Proposition 2.1]{morozGroundstatesNonlinearChoquard2013} that
\begin{itemize}
\item
$
\mathcal{T}
=
\parens*{\frac{1}{2} - \frac{1}{2 p}}
\mathcal{S}^{\frac{p}{p - 1}}
$,
where we recall that $\T$ is the action level defined in \eqref{eqn:min-N_A};
\item
if $u$ solves \eqref{eqn:min-N_A}, then $u$ solves \eqref{eqn:min-Q};
\item
if $u$ solves \eqref{eqn:min-Q}, then $k u$ solves \eqref{eqn:min-N_A}, where $k$ denotes the unique positive constant such that
$k u \in \N_\A$, that is,
\[
k
=
\parens*{
\frac{\norm{u}_{H^1}^2}{
	\int_{\real^N}
		\parens*{I_\alpha \ast \abs{u}^p}
		\abs{u}^p
	\dif \mu
}}^{\frac{1}{2 \parens{p - 1}}}.
\]
\end{itemize}
\end{rmk}

Consider the functionals
$F_1, F_2 \colon E \parens{\real^N} \to \real$
defined as
\[
F_1 \parens{u, v}
=
\norm{u}_{H^1}^2
-
\int_{\real^N}
	\parens*{I_\alpha \ast \abs{v}^p} \abs{u}^p
\dif \mu
\quad \text{and} \quad
F_2 \parens{u, v}
=
\norm{v}_{H^1}^2
-
\int_{\real^N}
	\parens*{I_\alpha \ast \abs{u}^p} \abs{v}^p
\dif \mu.
\]
The motivation for these definitions is that if $F_i \parens{u, v} = 0$, then $\parens{u, v}$ satisfies the Nehari identity associated with the $i$\textsuperscript{th} equation in \eqref{eqn:Choquard-system:2}. We also introduce the following Nehari-type manifold:
\[
\widetilde{\N}
:=
\set*{
	\parens{u, v} \in E \parens{\real^N}:
	u, v \not \equiv 0;
	\quad
	F_1 \parens{u, v} = 0
	\quad \text{and} \quad
	F_2 \parens{u, v} = 0
}
\]
and we denote its least action level by
$
\widetilde{c}
:=
\inf_{\parens{u, v} \in \widetilde{\N}}
J \parens{u, v}
$.
Let us show that the newly-defined action level
$\widetilde{c}$ coincides with the action level $c$ defined in \eqref{eqn:min-N}.

\begin{lem}
\label{lem:action-lvl-Nehari}
The equality 
$c = \widetilde{c}$ holds.
\end{lem}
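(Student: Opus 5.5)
The plan is to prove the two inequalities $c \leq \widetilde{c}$ and $\widetilde{c} \leq c$ separately. Throughout, set $D(u, v) := \int_{\real^N} \parens*{I_\alpha \ast \abs{u}^p} \abs{v}^p \dif \mu$. This quantity is symmetric in $(u,v)$ by Fubini, since $I_\alpha$ is even, and it is homogeneous: $D(su, tv) = s^p t^p D(u,v)$ for $s, t > 0$. Since $\tau = 1$ and $q = p$, a direct computation gives
\[
\langle J'(u,v), (u,v) \rangle = \norm{u}_{H^1}^2 + \norm{v}_{H^1}^2 - 2 D(u,v) = F_1(u,v) + F_2(u,v).
\]
Consequently, on $\N_J$ we have $D(u,v) = \frac{1}{2}\parens*{\norm{u}_{H^1}^2 + \norm{v}_{H^1}^2}$, and
\[
J(u,v) = \parens*{\frac{1}{2} - \frac{1}{2p}} \parens*{\norm{u}_{H^1}^2 + \norm{v}_{H^1}^2}.
\]

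\textbf{Step 1: $c \leq \widetilde{c}$.} If $(u,v) \in \widetilde{\N}$, then $F_1 + F_2 = 0$ and $(u,v) \neq (0,0)$, so $(u,v) \in \N_J$. Hence $\widetilde{\N} \subset \N_J$, and the inequality follows immediately.

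\textbf{Step 2: $\widetilde{c} \leq c$.} Fix $(u,v) \in \N_J$ and write $a = \norm{u}_{H^1}^2$, $b = \norm{v}_{H^1}^2$.
\begin{itemize}
\item First, $D(u,v) = (a+b)/2 > 0$, because $(u,v) \neq (0,0)$. Since $D$ vanishes whenever one component vanishes, this forces $u, v \not\equiv 0$, so $a, b > 0$.
\item Next, look for $s, t > 0$ with $(su, tv) \in \widetilde{\N}$. The two conditions $F_1 = F_2 = 0$ read
\[
s^2 a = s^p t^p D \quad\text{and}\quad t^2 b = s^p t^p D.
\]
The first consequence is $s^2 a = t^2 b$, i.e. $t = s\sqrt{a/b}$.
\item Substituting back gives the explicit value
\[
s^{2p-2} = \frac{a^{1 - p/2} b^{p/2}}{D} = \frac{2 a^{1-p/2} b^{p/2}}{a+b},
\]
which is positive, so such $s, t$ exist.
\item The action of this pair is
\[
J(su, tv) = \parens*{\frac{1}{2} - \frac{1}{2p}}(s^2 a + t^2 b) = \parens*{\frac{1}{2} - \frac{1}{2p}}\, 2 s^2 a.
\]
Comparing with $J(u,v)$, it therefore suffices to show $2 s^2 a \leq a + b$.
\item Raising to the power $p-1$ and inserting the formula for $s^{2p-2}$, this inequality becomes
\[
2^{p-1} a^{p-1} \cdot \frac{2 a^{1-p/2} b^{p/2}}{a+b} \leq (a+b)^{p-1},
\]
that is, $2^p (ab)^{p/2} \leq (a+b)^p$. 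This is exactly the AM--GM inequality $2\sqrt{ab} \leq a+b$.
\end{itemize}
Hence $\widetilde{c} \leq J(su,tv) \leq J(u,v)$. Taking the infimum over $\N_J$ yields $\widetilde{c} \leq c$.

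The only delicate point is the computation of the scaling $(s,t)$ and the reduction of the action comparison to AM--GM. This step uses $p > 1$ in the exponent $1/(p-1)$, and the positivity of $D$, which is what excludes the case where one component vanishes.
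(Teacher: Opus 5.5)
Your proof is correct, and for the inequality $\widetilde{c} \leq c$ it takes a genuinely different route from the paper. The other direction, via $\widetilde{\N} \subset \N_J$, is the same in both.

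\textbf{The paper's route.} The paper does not rescale arbitrary points of $\N_J$. It imports results from the authors' earlier work:
\begin{enumerate}
\item a ground state $(u,v)$ attaining $c$ exists;
\item this ground state is a genuine solution of the system, so testing each equation with its own component gives $F_1(u,v) = F_2(u,v) = 0$;
\item both components of the ground state are nontrivial.
\end{enumerate}
Together these place the minimizer itself in $\widetilde{\N}$, and $\widetilde{c} \leq J(u,v) = c$ follows immediately.

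\textbf{Your route.} You use the componentwise fibering $(u,v) \mapsto (su, tv)$ with $t = s\sqrt{a/b}$. You then reduce $J(su,tv) \leq J(u,v)$ to the AM--GM inequality $2\sqrt{ab} \leq a+b$. This is purely algebraic. It needs no existence of a minimizer and no Lagrange-multiplier argument that a Nehari minimizer is a critical point. Nontriviality of both components comes for free from your identity $D(u,v) = (a+b)/2 > 0$ on $\N_J$.

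\textbf{What each buys.} Your argument proves slightly more: every point of $\N_J$ projects onto $\widetilde{\N}$ without increasing the action. Equality holds exactly when $\norm{u}_{H^1} = \norm{v}_{H^1}$, in which case $s = t = 1$ and $(u,v)$ already lies in $\widetilde{\N}$. So $c = \widetilde{c}$ would hold even where ground states were not known to exist. The paper's proof is shorter on the page only because that work is delegated to the cited theorems.
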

\begin{proof}
Due to the inclusion
$\widetilde{\N} \subset \N_J$,
it holds that
$
\inf_{\widetilde{\N}} J = \widetilde{c}
\geq
c = \inf_{\N_J} J
$.
As such, we only have to show that
$\widetilde{c} \leq c$.
Let $\parens{u, v} \in \N_J$ denote a ground state of \eqref{eqn:Choquard-system:2} (whose existence follows from \cite[Theorem 1.1]{boerStandingWavesNonlinear2025}). In particular, $\parens{u, v}$ is a classical solution due to \cite[Theorem 1.4]{boerStandingWavesNonlinear2025}. Therefore, the Nehari identity associated with each equation in \eqref{eqn:Choquard-system:2} is satisfied, that is,
$
F_1 \parens{u, v} = F_2 \parens{u, v} = 0
$.
It also follows from
\cite[Theorem 1.1]{boerStandingWavesNonlinear2025}
that
$u \not \equiv 0$ and $v \not \equiv 0$,
so
$
\parens{u, v} \in \widetilde{\mathcal{N}}
$.
We deduce that
$\widetilde{c} \leq J \parens{u, v} = c$.
\end{proof}

The following result is inspired by \cite[Lemma 2.2]{wangStandingWavesCoupled2017}. More precisely, it establishes the Cauchy--Schwarz inequality associated with a certain inner product on
$L^{\frac{2 N}{N + \alpha}} \parens{\real^N}$.
\begin{lem}
\label{lem:CS}
\begin{enumerate}
\item
Given
$
u, v
\in
L^{\frac{2 N}{N + \alpha}} \parens{\real^N}
$,
the following inequalities are satisfied:
\[
\int_{\real^N} \int_{\real^N}
	I_\alpha \parens{x - y}
	u \parens{x}
	u \parens{y}
\dif x \dif y
\geq 0,
\quad
\int_{\real^N} \int_{\real^N}
	I_\alpha \parens{x - y}
	v \parens{x}
	v \parens{y}
\dif x \dif y
\geq
0
\]
and
\begin{equation}
\label{lem:CS:0}
\begin{aligned}
\abs*{
\int_{\real^N} \int_{\real^N}
	I_\alpha \parens{x - y}
	u \parens{x}
	v \parens{y}
\dif x \dif y
}
&\leq
\parens*{
	\int_{\real^N} \int_{\real^N}
		I_\alpha \parens{x - y}
		u \parens{x}
		u \parens{y}
	\dif x \dif y
}^{\frac{1}{2}}
\\
&\times
\parens*{
	\int_{\real^N} \int_{\real^N}
		I_\alpha \parens{x - y}
		v \parens{x}
		v \parens{y}
	\dif x \dif y
}^{\frac{1}{2}}.
\end{aligned}
\end{equation}
\item
Suppose that
$
u, v
\in
L^{\frac{2 N}{N + \alpha}} \parens{\real^N}
$.
The equality case in \eqref{lem:CS:0} holds if, and only if, there exists $\lambda \in \real$ such that $v = \lambda u$.
\end{enumerate}
\end{lem}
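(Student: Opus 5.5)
The plan is to show that the bilinear form
\[
B(f,g) := \int_{\real^N}\int_{\real^N} I_\alpha(x-y) f(x) g(y)\, \dif x\, \dif y
\]
is a positive definite inner product on $L^{\frac{2N}{N+\alpha}}(\real^N)$. Once this is established, both items follow from the abstract Cauchy--Schwarz inequality and its equality case.

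First, $B$ is well defined, symmetric and bilinear. Applying item 1 of Proposition \ref{prop:HLS} with $\theta_1=\theta_2=\frac{2N}{N+\alpha}$ shows that $B(f,g)$ is finite and
\[
|B(f,g)| \le C \norm{f}_{L^{\frac{2N}{N+\alpha}}}\norm{g}_{L^{\frac{2N}{N+\alpha}}}.
\]
Symmetry comes from $I_\alpha$ being even, and bilinearity is clear.

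Second, I prove $B(f,f)\ge 0$, with equality only when $f=0$. I would use the semigroup property $I_\alpha = I_{\alpha/2} \ast I_{\alpha/2}$, which holds for the normalization of $I_\alpha$ given in the introduction. By Fubini,
\[
B(f,f) = \int_{\real^N} \abs{I_{\alpha/2}\ast f}^2 \, \dif \mu .
\]
Here $I_{\alpha/2}\ast f \in L^2(\real^N)$ by item 2 of Proposition \ref{prop:HLS} with $r=\frac{2N}{N+\alpha}$, since then $\frac{Nr}{N-\frac{\alpha}{2} r}=2$. To justify Fubini, I first take $f\ge 0$; for general $f$, I split $f=f^+-f^-$ and use absolute convergence, which holds because $I_{\alpha/2}\ast \abs{f}\in L^2$. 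This gives nonnegativity, which is the first part of item 1.

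For definiteness, suppose $B(f,f)=0$. Then $I_{\alpha/2}\ast f = 0$ almost everywhere. Pairing with a test function $\varphi$ gives $\int f\, (I_{\alpha/2}\ast\varphi)\,\dif\mu=0$. Since $(-\Delta)^{\alpha/4}$ inverts $I_{\alpha/2}$ on a dense class (for instance, $\varphi = I_{\alpha/2}\ast\psi$-type functions, or by Fourier transform, $\widehat{I_{\alpha/2}\ast\varphi} = \abs{\xi}^{-\alpha/2}\widehat{\varphi}$), the functions $I_{\alpha/2}\ast\varphi$ are dense enough to conclude $f=0$. Alternatively, one can argue directly on the Fourier side: $B(f,f)=c\int \abs{\xi}^{-\alpha}\abs{\hat f(\xi)}^2\,\dif\xi$, where $c>0$. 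This quantity vanishes only if $\hat f=0$ almost everywhere, first for Schwartz $f$ and then by density. I expect this definiteness step to be the main technical obstacle, because $f$ lies only in $L^{\frac{2N}{N+\alpha}}$. My first attempt would be the semigroup/duality route, with the density of $\set{I_{\alpha/2}\ast\varphi : \varphi\in C_c^\infty}$ in $L^{\frac{2N}{N-\alpha}}$ taken from standard Riesz potential theory.

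Third, with $B$ a genuine inner product, inequality \eqref{lem:CS:0} is the usual Cauchy--Schwarz inequality. For item 2, the standard argument applies. If $u=0$ the statement is trivial (take $v=\lambda u$ only if $v=0$; otherwise interpret it as $u=\lambda' v$, noting that the case $u\equiv0$ is degenerate). If $u\ne 0$, equality forces the discriminant of $t\mapsto B(v-tu,v-tu)$ to vanish, so $B(v-\lambda u,v-\lambda u)=0$ for $\lambda=B(u,v)/B(u,u)$. Definiteness then gives $v=\lambda u$. The converse direction is immediate by bilinearity.
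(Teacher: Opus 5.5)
Your proposal is correct and follows essentially the paper's route: the paper's identity \eqref{lem:CS:1} is exactly the factorization $I_\alpha = I_{\alpha/2} \ast I_{\alpha/2}$, derived there by scaling and rotation invariance with an unspecified constant $K$, and Fubini plus the Cauchy--Schwarz inequality in $L^2$ then give item 1. For item 2 you are more careful than the paper: its one-line equality argument tacitly relies on the injectivity of $f \mapsto I_{\alpha/2} \ast f$ on $L^{\frac{2N}{N+\alpha}}$, which you isolate as the definiteness step, and you correctly note that the case $u = 0 \neq v$ requires reading the conclusion as linear dependence.
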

\begin{proof}
Notice that the integrals in the statement are well defined due to the Hardy--Littlewood--Sobolev inequality.

\paragraph{A preliminary identity:} There exists a positive constant $K$ such that
\begin{equation}
\label{lem:CS:1}
\frac{1}{\abs{x - y}^{N - \alpha}}
=
K
\int_{\real^N}
	\frac{\dif z}{
		\abs{x - z}^{\frac{2 N - \alpha}{2}}
		\abs{y - z}^{\frac{2 N - \alpha}{2}}
	}
\end{equation}
for every $x \in \real^N$ and
$y \in \real^N \setminus \set{x}$.
Notice that the integral on the right-hand side is well defined because
$0 < \alpha < N$.
Define
\[
h \parens{x, y}
=
\int_{\real^N}
	\frac{\dif z}{
		\abs{x - z}^{\frac{2 N - \alpha}{2}}
		\abs{y - z}^{\frac{2 N - \alpha}{2}}
	}
\]
for every
$\parens{x, y} \in \real^N \times \real^N$ with $x \neq y$. A change of variable shows that
\[
h \parens{x, y}
=
h \parens{0, y - x}
=
\int_{\real^N}
	\frac{\dif z}{
		\abs{z}^{\frac{2 N - \alpha}{2}}
		\abs{y - x - z}^{\frac{2 N - \alpha}{2}}
	}
\]
and $h \parens{0, R y} = h \parens{0, y}$ for every $R \in \mathrm{O} \parens{N}$. We deduce that there exists
$
H
\colon
\ooi{0, \infty} \to \ooi{0, \infty}
$
such that
$h \parens{x, y} = H \parens{\abs{x - y}}$.
Another change of variable shows that
$
H \parens{t s}
=
\frac{H \parens{s}}{t^{N - \alpha}}
$
for every $s, t > 0$. Therefore, the right-hand side of \eqref{lem:CS:1} scales similarly as the left-hand side, hence the result.

\paragraph{Proof of the inequality.}
It follows from Fubini's theorem and \eqref{lem:CS:1} that
\[
\int_{\real^N} \int_{\real^N}
	\frac{
		u \parens{x}
		v \parens{y}
	}{
		\abs{x - y}^{N - \alpha}
	}
\dif x \dif y
=
K
\int_{\real^N}
	f_u \parens{z}
	f_v \parens{z}
\dif z,
\]
where
\[
f_g \parens{z}
:=
\parens*{
	\int_{\real^N}
		\frac{g \parens{x}}{
			\abs{x - z}^{\frac{2 N - \alpha}{2}}
		}
	\dif x
}.
\]
Due to the Cauchy--Schwarz inequality,
\begin{equation}
\label{lem:CS:1.5}
\abs*{
\int_{\real^N} \int_{\real^N}
	\frac{
		u \parens{x}
		v \parens{y}
	}{
		\abs{x - y}^{N - \alpha}
	}
\dif x \dif y
}
\leq
K
\parens*{
\int_{\real^N}
	f_u \parens{z}^2
\dif z
}^{\frac{1}{2}}
\parens*{
\int_{\real^N}
	f_v \parens{z}^2
\dif z
}^{\frac{1}{2}}
=
\eqref{lem:CS:2},
\end{equation}
where
\begin{multline}
\label{lem:CS:2}
K
\parens*{
\int_{\real^N}
\parens*{
	\int_{\real^N}
		\frac{u \parens{x}}{
			\abs{x - z}^{\frac{2 N - \alpha}{2}}
		}
	\dif x
}
\parens*{
	\int_{\real^N}
		\frac{u \parens{y}}{
			\abs{y - z}^{\frac{2 N - \alpha}{2}}
		}
	\dif x
}
\dif z
}^{\frac{1}{2}}
\times
\\
\times
\parens*{
\int_{\real^N}
\parens*{
	\int_{\real^N}
		\frac{v \parens{x}}{
			\abs{x - z}^{\frac{2 N - \alpha}{2}}
		}
	\dif x
}
\parens*{
	\int_{\real^N}
		\frac{v \parens{y}}{
			\abs{y - z}^{\frac{2 N - \alpha}{2}}
		}
	\dif x
}
\dif z
}^{\frac{1}{2}}.
\end{multline}
At this point, the inequality follows from an application of \eqref{lem:CS:1} to \eqref{lem:CS:2}. Observe that the integrals at the right-hand side in \eqref{lem:CS:0} are nonnegative due to the identity
\[
\int_{\real^N} \int_{\real^N}
	I_\alpha \parens{x - y}
	g \parens{x}
	g \parens{y}
\dif x \dif y
=
\int_{\real^N}
	f_g \parens{z}^2
\dif z.
\]

\paragraph{Saturation of the inequality.}
The saturation of the inequality only happens when $v = \lambda u$ because this is the precisely the situation where the equality in \eqref{lem:CS:1.5} is satisfied.
\end{proof}

We proceed to the proof of the theorem.

\begin{proof}[\textbf{Proof of Theorem \ref{thm:classification}}]
Let $w \in H^1 \parens{\real^N}$ denote a (positive) ground state of the Choquard-type equation
\[
- \Delta w +  w
=
\parens*{I_\alpha \ast \abs{w}^p}
\abs{w}^{p - 2} w,
\]
whose existence follows from \cite[Theorem 3.2]{morozGuideChoquardEquation2017}. Due to the fact that
$\parens{w, w} \in \widetilde{\N}$, it follows that
\begin{equation}
\label{proof:classification:1}
\widetilde{c}
\leq
J \parens{w, w}
=
2 \mathcal{T}
=
\frac{p - 1}{p}
\mathcal{S}^\frac{p}{p - 1}.
\end{equation}
Let
\[
a
=
\int_{\real^N}
	\parens*{I_\alpha \ast \abs{u}^p} \abs{u}^p
\dif \mu
\quad \text{and} \quad
b
=
\int_{\real^N}
	\parens*{I_\alpha \ast \abs{v}^p} \abs{v}^p
\dif \mu.
\]
In view of Lemma \ref{lem:CS} and the fact that
$\parens{u, v} \in \widetilde{\mathcal{N}}$, one infers that
\begin{equation}
\label{proof:classification:2}
\mathcal{S} a^{\frac{1}{p}}
\leq
\norm{u}_{H^1}^2
=
\int_{\real^N}
	\parens*{I_\alpha \ast \abs{u}^p} \abs{v}^p
\dif \mu
\leq
a^{\frac{1}{2}} b^{\frac{1}{2}}
\end{equation}
and
\begin{equation}
\label{proof:classification:3}
\mathcal{S} b^{\frac{1}{p}}
\leq
\norm{v}_{H^1}^2
=
\int_{\real^N}
	\parens*{I_\alpha \ast \abs{v}^p} \abs{u}^p
\dif \mu
\leq
a^{\frac{1}{2}} b^{\frac{1}{2}}.
\end{equation}
It follows from \eqref{proof:classification:2} and \eqref{proof:classification:3} that
\[
a^{\parens*{\frac{1}{2} - \frac{1}{p}}}
b^{\frac{1}{2}}
\geq \mathcal{S}
\quad \text{and} \quad
a^{\frac{1}{2}}
b^{\parens*{\frac{1}{2} - \frac{1}{p}}}
\geq \mathcal{S},
\]
which imply
\begin{equation}
\label{proof:classification:4}
a^{\frac{p - 1}{p}}
b^{\frac{p - 1}{p}}
\geq
\mathcal{S}^2.
\end{equation}
Then, from Lemma \ref{lem:action-lvl-Nehari} and \eqref{proof:classification:1}--\eqref{proof:classification:3},
\[
\mathcal{S}
\parens*{a^{\frac{1}{p}} + b^{\frac{1}{p}}}
\leq
\norm*{\parens{u, v}}_E^2
=
\frac{2 p}{p - 1}
c
=
\frac{2 p}{p - 1}
\widetilde{c}
\leq
2 \mathcal{S}^{\frac{p}{p - 1}},
\]
and thus
\begin{equation}
\label{proof:classification:5}
a^{\frac{1}{p}} + b^{\frac{1}{p}}
\leq
2 \mathcal{S}^{\frac{1}{p - 1}}.
\end{equation}

We claim that
\begin{equation}
\label{proof:classification:6}
a = b = \mathcal{S}^{\frac{p}{p - 1}}.
\end{equation}
Indeed, consider the following regions of
$
P
:=
\oci{0, 2 \mathcal{S}^{\frac{1}{p - 1}}}^2
\subset
\real^2
$:
\[
\Gamma_-
=
\set*{
	\parens{s, t} \in P:
	s + t \leq 2 \mathcal{S}^{\frac{1}{p - 1}}
}
\quad \text{and} \quad
\Gamma_+
=
\set*{
	\parens{s, t} \in P:
	s^{p - 1} t^{p - 1}
	\geq
	\mathcal{S}^2
}.
\]
The boundary of these regions are parametrized by the functions
$
f_\pm
\colon
\oci{0, 2 \mathcal{S}^{\frac{1}{p - 1}}}
\to
\ooi{0, \infty}
$
respectively defined as
\[
f_- \parens{s}
=
2 \mathcal{S}^{\frac{1}{p - 1}} - s
\quad \text{and} \quad
f_+ \parens{s}
=
\mathcal{S}^{\frac{2}{p - 1}} s^{- 1}.
\]
Let $f = f_+ - f_-$. On one hand, it is immediate that
$f \parens{\mathcal{S}^{\frac{1}{p - 1}}} = 0$.
On the other hand, it follows from single-variable calculus that
\[
\begin{cases}
f' \parens{s} < 0,
&
\text{if}
~
0 < s < \mathcal{S}^{\frac{1}{p - 1}}\, ,
\\
f ' \parens{\mathcal{S}^{\frac{1}{p - 1}}}
=
0\, ,
\\
f' \parens{s} > 0,
&
\text{if}
~
s > \mathcal{S}^{\frac{1}{p - 1}}.
\end{cases}
\]
Then, $f \parens{s} = 0$ if, and only if $s = \mathcal{S}^{\frac{1}{p - 1}}$. As such, \eqref{proof:classification:4} and \eqref{proof:classification:5} imply
\[
\parens{a^{\frac{1}{p}}, b^{\frac{1}{p}}}
\in
\Gamma_- \cap \Gamma_+
=
\set*{
	\parens{
		\mathcal{S}^{\frac{1}{p - 1}},
		\mathcal{S}^{\frac{1}{p - 1}}
	}
},
\]
hence the result.

In view of \eqref{proof:classification:6}, it follows from \eqref{proof:classification:2} and \eqref{proof:classification:3} that
\[
\frac{\norm{u}_{H^1}^2}{a^{\frac{1}{p}}}
=
\frac{\norm{v}_{H^1}^2}{b^{\frac{1}{p}}}
=
\mathcal{S}.
\]
That is, $u$ and $v$ are positive solutions to \eqref{eqn:min-Q}. Due to the equalities
$a = \norm{u}_{H^1}^2$ and
$b = \norm{v}_{H^1}^2$, it follows that
$u, v \in \N_\A$. Moreover, in view of Remark \ref{rmk:MorozVanSchaftingen}, $u, v$ are positive ground states of \eqref{eqn:Choquard-type}. Due to the equalities
\[
a
=
b
=
\int_{\real^N}
	\parens*{I_\alpha \ast \abs{v}^p} \abs{u}^p
\dif \mu,
\]
it follows from Lemma \ref{lem:CS} that
$v = \lambda u$ for a certain
$\lambda \in \real$. As $a = b \neq 0$, it follows that $\lambda = 1$.
\end{proof}

\subsection*{Acknowledgment}
Eduardo de Souza B\"oer was supported by FAPESP/Brazil grant 2023/05445-2. Ederson Moreira dos Santos was partially supported by CNPq/Brazil grant 312867/2023-9 and FAPESP/Brazil grant 2022/16407-1. 
Gustavo de Paula Ramos was supported by FAPESP/Brazil grant 2024/20593-0.

\end{document}